\def\Z{\mathbb Z}
\def\N{\mathbb N}
\def\R{\mathbb R}
\def\fc{\mathfrak c}
\def\fs{{\mathfrak s}}
\def\fd{{\mathfrak d}}
\def\fe{{\mathfrak e}}
\def\fq{{\mathfrak q}}
\def\fX{{\mathfrak X}}
\def\1{{\bf 1}}
\def\pmod #1{\ ({\rm{mod}}\ #1)}
\def\mod #1{\ {\rm mod}\ #1}
\theoremstyle{plain}
\newtheorem{theorem}{Theorem}
\newtheorem{conjecture}{Conjecture}
\newtheorem{proposition}{Proposition}
\newtheorem{lemma}{Lemma}
\theoremstyle{definition}
\theoremstyle{remark}
\newtheorem*{Rem}{Remark}
\begin{document}
\title{Small gaps between the Piatetski-Shapiro primes}

\author{Hongze Li}
\address{Department of Mathematics, Shanghai Jiao Tong University, Shanghai 200240, People's Republic of China}
\email{lihz@sjtu.edu.cn}
\author{Hao Pan}
\address{Department of Mathematics, Nanjing University, Nanjing 210093,
People's Republic of China}
\email{haopan79@zoho.com}
\keywords{Piatetski-Shapiro prime, Maynard-Tao theorem, Selberg sieve, Prime gaps}
\subjclass[2010]{Primary 11N05; Secondary 11L20, 11N36, 11P32}
\begin{abstract}
Suppose that $1<c<9/8$. For any $m\geq 1$, there exist infinitely many $n$ such that
$$
\{[n^c],\ [(n+1)^c],\ \ldots,\ [(n+k_0)^c]\}
$$
contains at least $m+1$ primes, if $k_0$ is sufficiently large (only depending on $m$ and $c$).
\end{abstract}
\maketitle

\section{Introduction}
\setcounter{lemma}{0}\setcounter{theorem}{0}\setcounter{proposition}{0}\setcounter{corollary}{0}
\setcounter{equation}{0}

Let $p_k$ denote the $k$-th prime. In view of the prime number theorem, the expected value of the prime gap $p_{k+1}-p_k$ is near to $\log p_k$. In 1940, Erd\H{o}s \cite{Er40} showed that
$$
\liminf_{k\to\infty}\frac{p_{k+1}-p_k}{\log p_k}\leq c_0
$$
for some  constant $0<c_0<1$. Later, the value of $c_0$ was successively improved. In 2009, using a refinement of the Selberg sieve method,  Goldston, Pintz and Y\i ld\i r\i m \cite{GPY09} proved that
$$
\liminf_{k\to\infty}\frac{p_{k+1}-p_k}{\log p_k}=0.
$$
Furthermore, under the Elliott-Halberstam conjecture, they also showed that
$$
\liminf_{k\to\infty}(p_{k+1}-p_k)\leq 16.
$$
In fact, if the twin prime conjecture is true, there are infinitely many $k$ such that $p_{k+1}-p_k=2$.
In 2014, Zhang \cite{Zh14} for the first time proved unconditionally that
$$
\liminf_{k\to\infty}(p_{k+1}-p_k)\leq 7\times 10^7,
$$
i.e.,
the gap $p_{k+1}-p_k$ can be infinitely often bounded by a finite number. One ingredient of Zhang's proof
is an improvement of the Bombieri-Vinogradov theorem for the smooth moduli. Subsequently, the bound for $p_{k+1}-p_k$ was rapidly reduced.
In 2015, with the help of a multi-dimensional sieve method, Maynard \cite{Ma15} gave a quite different proof of Zhang's result and improved the bound to $600$. Nowadays, the best known bound is $246$ \cite{Pol14}. Furthermore, using the Maynard's sieve method, Maynard and Tao independently found that
$$
\liminf_{k\to\infty}\frac{p_{k+m}-p_k}{\log p_k}\leq C_m
$$
for any $m\geq 1$, where $C_m$ is a positive constant only depending on $m$. Now, basing on the discussions of Maynard and Tao, the bounded gaps between the primes of some special forms are also investigated. For examples, the Maynard-Tao theorem has been extended to:
the primes $p$ with $p+2$ being an almost prime \cite{LP15}, the primes of the form $[\alpha n+\beta]$ \cite{CPS15}, the primes having a given primitive root (under GRH) \cite{Po14}, the primes $p=a^2+b^2$ with $a\leq \epsilon\sqrt{p}$ \cite{Th}, etc.. In fact, the Maynard-Tao theorem is valid for any subset of primes with positive relative upper density satisfying some mean value theorem, we refer the reader to \cite{Ma14}.

Let
$$
\N^c=\{[n^c]:\,n\in\N\},
$$
where $[x]=\max\{m\leq x:\,m\in\Z\}$.
That is, $\N^c$ the set of the integers of the form $[n^c]$.
In 1953, Piatetski-Shapiro \cite{PS53} showed that there are infinitely many primes lying in $\N^c$ provided $1<c<12/11$. In fact, he got
$$
\sum_{p\leq x,\,p\in \N^c}1=(1+o(1))\frac{x^{1/c}}{\log x}.
$$
The primes lying in $\N^c$ are usually called Piatetski-Shapiro primes, so that the Piatetski-Shapiro primes form a thin set of primes, and the average gap between the primes in $[1,x]\cap\N^c$ is about $x^{1-1/c}\log x$.
The upper bound for $c$ satisfying
$$
\sum_{p\leq x,\,p\in \N^c}1\gg\frac{x^{1/c}}{\log x}
$$
has been improved many times during the past six decades. Now the best admissible range of $c$ is $(1,243/205)$ by Rivat and Wu \cite{RW01}.

It is natural to ask how small the gaps between the primes in $\N^c$ can be.
Let $p_k^{(c)}$ be the $k$-th prime in $\N^c$.
Of course, we can't expect that the $p_{k+1}^{(c)}-p_k^{(c)}$ can be bounded by a finite number, since
$(n+1)^c-n^c\approx cn^{c-1}$
implies
$$
p_{k+1}^{(c)}-p_k^{(c)}\geq \big(c+o(1)\big)\cdot\big(p_k^{(c)}\big)^{1-1/c}.
$$
However, in this paper, we shall show that if $1<c<9/8$, then for any $m\geq 1$,
$$
\liminf_{k\to\infty}\frac{p_{k+m}^{(c)}-p_k^{(c)}}{\big(p_k^{(c)}\big)^{1-\gamma}}<C_m,
$$
where $\gamma=1/c$ and $C_m>0$ is a constant only depending on $m$ and $c$. That is, we have
\begin{equation}
p_{k+m}^{(c)}-p_k^{(c)}\leq C_m\big(p_k^{(c)}\big)^{1-\gamma}
\end{equation}
for infinitely many $k$. Our main result is that
\begin{theorem}\label{main}
Suppose that
$1<c<9/8$ and $m\geq 1$.
If
$$
k_0\geq e^{Cm}
$$
where $C>0$ is a constant only depending on $c$,
then there are infinitely many $n$ such that the set
$$
\{[n^c],\ [(n+1)^c],\ \ldots,\ [(n+k_0)^c]\}
$$
contains at least $m+1$ primes.
\end{theorem}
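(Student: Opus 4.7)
\emph{The plan is to run the Maynard-Tao multidimensional sieve with the integer $n$ as the sieve variable, treating each shifted integer $[(n+h)^c]$ as a candidate for primality.} First I would pick an admissible $K$-tuple $\cH = \{h_1<\cdots<h_K\}\subset\{0,1,\ldots,k_0\}$. Standard admissibility constructions (e.g.\ the first $K$ primes exceeding $K$) give admissible tuples of diameter $\ll K\log K$, so the hypothesis $k_0\geq e^{Cm}$ with $C$ sufficiently large in terms of $c$ accommodates a value of $K$ exceeding the Maynard-Tao threshold needed to force $m+1$ primes in a $K$-tuple at whatever level of distribution $\theta$ we end up with.

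On the dyadic range $n\in[N,2N]$ I introduce the Maynard-Tao weights
\[
w_n=\Bigl(\sum_{\substack{d_i\mid n+h_i\,\forall i\\ d_1\cdots d_K<R}}\lambda_{d_1,\ldots,d_K}\Bigr)^2
\]
with the standard Maynard choice of $\lambda_{d_1,\ldots,d_K}$ and level $R=N^{\theta/2-\varepsilon}$, and I study
\[
S_1=\sum_{N\leq n<2N}w_n,\qquad S_2=\sum_{j=1}^K\sum_{N\leq n<2N}\1_{[(n+h_j)^c]\text{ prime}}\,w_n.
\]
Expanding the square, $S_1$ reduces to the elementary distribution of $n$ in arithmetic progressions. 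For each $S_2^{(j)}$ the Maynard support condition forces $d_j=e_j=1$, and the inner sum becomes a count of Piatetski-Shapiro primes $[m^c]$ with $m\equiv a\pmod q$ for some modulus $q\leq N^\theta$. Producing the expected main term $(1+o(1))\pi_c(N)/q$ on average over $q\leq N^\theta$ is therefore exactly a Bombieri-Vinogradov type theorem for the Piatetski-Shapiro primes.

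The main obstacle is accordingly the estimate
\[
\sum_{q\leq N^\theta}\max_{\gcd(a,q)=1}\Bigl|\pi_c(N;q,a)-\frac{\pi_c(N)}{q}\Bigr|\ll_A\frac{N}{(\log N)^A}
\]
for some fixed $\theta>0$, where $\pi_c(N;q,a)=\#\{m\leq N:\,m\equiv a\pmod q,\ [m^c]\text{ prime}\}$ and $\pi_c(N)\sim\gamma N/\log N$. To prove this I would write $\1_{n\in\N^c}=\lfloor-n^\gamma\rfloor-\lfloor-(n+1)^\gamma\rfloor$, replace the fractional parts via Vaaler's trigonometric approximation of $\psi(x)=\{x\}-1/2$, and then bound the resulting exponential sums of the form $\sum_{n\sim x,\,n\equiv a\pmod q}\Lambda(n)\,e(h n^\gamma)$ by decomposing $\Lambda$ through a Heath-Brown or Vaughan identity into Type I and Type II bilinear pieces, estimated by the exponent-pair/van der Corput method and with the $q$-averaging absorbed by the large sieve. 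The admissible range $c<9/8$ arises precisely as the regime in which this combination of exponential-sum bounds survives the $q$-summation up to a positive level $\theta$; getting the Type II pieces past the loss inflicted by the $e(h n^\gamma)$ twist and by the length of the $q$-range is the most delicate point. Once the Bombieri-Vinogradov estimate is in hand, Maynard's polynomial optimization yields $S_2>m\,S_1$ for $K$ sufficiently large depending on $m$ and $c$, so some $n\in[N,2N]$ produces at least $m+1$ primes among $\{[(n+h_j)^c]:1\leq j\leq K\}\subset\{[(n+i)^c]:0\leq i\leq k_0\}$; letting $N\to\infty$ concludes.
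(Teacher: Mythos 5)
Your proposal has a fundamental mismatch between the sieve weight and the prime-detection target, and the paper explicitly flags this as the central difficulty. You define $w_n$ via divisibility conditions $d_i\mid n+h_i$ on the \emph{index} variable, yet you want $[(n+h_j)^c]$ to be prime. The Maynard--Tao mechanism derives its leverage from the fact that the sieve removes small prime factors from $n+h_j$, which boosts the conditional probability that $n+h_j$ is \emph{itself} prime. Removing small prime factors from $n+h_j$ does nothing to enrich the set of $n$ for which $[(n+h_j)^c]$ is prime, so $S_2/S_1$ never grows with the tuple size. Concretely, your step ``the Maynard support condition forces $d_j=e_j=1$'' is false: that deduction requires $n+h_j$ to be prime (and below the sieve level), whereas primality of $[(n+h_j)^c]$ imposes no constraint whatsoever on the divisors of $n+h_j$. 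Once the $d_j,e_j>1$ terms persist, the main term of $S_2^{(j)}$ is just $\sim\pi_c(N)/N$ times the main term of $S_1$, a density factor $\asymp 1/\log N$ that is independent of $K$, and Maynard's optimization produces no contradiction.

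The paper states at the outset that one must instead take the weight $\big(\sum_{d_i\mid [(n+i)^c]}\lambda_{d_0,\ldots,d_{k_0}}\big)^2$, i.e.\ sieve the actual Piatetski--Shapiro values. The price is that $[(n+i)^c]$ is nonlinear in $n$, so the conditions $d_i\mid[(n+i)^c]$ are not residue classes in $n$. The authors' key device is to insert a pair of smooth periodic cutoffs $\chi,\psi$ (Lemma~\ref{smooth}) that restrict to the subset of $n$ on which $\{n^{\gamma}\}$ and $\{cn^{1-\gamma}\}$ lie in carefully chosen short intervals; on that subset $[(n+i)^c]=n+i[cn^{1-\gamma}]$ for every $0\leq i\leq k_0$ (Lemma~\ref{nchi}), so the $k_0+1$ values form a genuine arithmetic progression with common difference $[cn^{1-\gamma}]$. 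The divisibility conditions then become linear congruences in $n$ and $[cn^{1-\gamma}]$ jointly, and after Fourier-expanding the floor one lands on exponential sums $\sum_{n\sim X}\Lambda(n)e\big(jn^{\gamma}+C_1 n+C_2 n^{1-\gamma}\big)$, controlled by the Balog--Friedlander hybrid estimate (Lemma~\ref{bf92exp}). Its admissibility condition $9(1-\gamma)+12\sigma<1$ is what yields $c<9/8$, not a Bombieri--Vinogradov theorem for $\N^c$. Indeed, by forcing the arithmetic-progression structure the authors only need Siegel--Walfisz for the equidistribution input, which is far weaker than the BV-for-Piatetski--Shapiro statement you identify as the hard core of your approach and completely avoids the delicate $q$-averaging you flag as the sticking point.
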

Although there also exists a mean value theorem for the Piatetski-Shapiro primes \cite{Pe03},
the main difficulty in the proof of Theorem \ref{main} is that the Piatetski-Shapiro primes are too sparse.
According to the Maynard sieve method, we have to use the weight
$$
\bigg(\sum_{\substack{d_i\mid [(n+i)^c]\\\text{for }0\leq i\leq k_0}}\lambda_{d_0,\ldots,d_k}\bigg)^2
$$
rather than
$$
\bigg(\sum_{\substack{d_i\mid n+h_i\\\text{for }0\leq i\leq k_0}}\lambda_{d_0,\ldots,d_k}\bigg)^2
$$
which is applicable to those subsets of primes with positive relative upper density.
Then our problem can be reduced to consider
$$
\sum_{\substack{X\leq n\leq 2X,\ n\equiv b\pmod{W}\\ d_i\mid [(n+i)^c]\text{ for }0\leq i\leq k_0}}\varpi([(n+i_0)^c])
$$
for some $0\leq i_0\leq k_0$, where $W$ is the product of the primes less than $\log\log\log X$ and $\varpi(n)=\log n$ or $0$ according to whether $n$ is prime or not. However, the above sum is not easy to estimate. Our strategy is to construct a suitable smooth function $\chi$ with $0\leq \chi(n)\leq 1$. Then it becomes possible to
estimate
$$
\sum_{\substack{X\leq n\leq 2X,\
n\equiv b\pmod{W}
\\ d_i\mid [(n+i)^c]\text{ for }0\leq i\leq k_0}}\chi([(n+i_0)^c])\cdot\varpi([(n+i_0)^c]).
$$
In fact, when $\chi([(n+i_0)^c])>0$, $[n^c],\ [(n+1)^c],\ \ldots,\ [(n+k_0)^c]$ form an arithmetic progression.
Furthermore, as we shall see later, we only need to use the Siegel-Walfisz theorem, rather than any mean value theorem.

In the next section, we shall extend the Maynard sieve method to the Piatetski-Shapiro primes. The proof of Theorem \ref{main} will be concluded in Section 3. Throughout this paper, $f(x)\ll g(x)$ means $f(x)=O\big(g(x)\big)$ as $x$ tends to $\infty$. And $\ll_\epsilon$ means the implied constant in $\ll$ only depends on $\epsilon$. Furthermore, as usual, we define $e(x)=\exp(2\pi\sqrt{-1}x)$ for $x\in\R$ and $\|x\|=\min_{m\in \mathbb{Z}}|x-m|$.

\section{Maynard's sieve method for the Piatetski-Shapiro primes}\label{sec2}
\setcounter{lemma}{0}\setcounter{theorem}{0}\setcounter{proposition}{0}\setcounter{corollary}{0}
\setcounter{equation}{0}\setcounter{proposition}{0}
Let $$\sigma_0=\frac{1}{200}\cdot\min\{c-1,\ 9-8c\},$$
and let ${k_0}$ be a large integer to be chosen later.
Suppose that
$f(t_0,t_1,\ldots,t_{k_0})$ is a symmetric smooth function whose support lies on the area
$$
\{(t_0,\ldots,t_{k_0}):\,t_0,\ldots,t_{k_0}\geq 0,\ t_0+\cdots+t_{k_0}\leq 1\}.
$$
Let $R=X^{\sigma_0}$ and define
$$
\lambda_{d_0,d_1,\ldots,d_{k_0}}=f\bigg(\frac{\log d_0}{\log R},\frac{\log d_1}{\log R},\ldots,\frac{\log d_{k_0}}{\log R}\bigg)\prod_{j=0}^{{k_0}}\mu(d_j).
$$
Clearly $\lambda_{d_0,\ldots,d_{k_0}}=0$ provided $d_0d_1\cdots d_{k_0}>R$.

Now suppose that $X$ is sufficiently large and let
$$
W=\prod_{p\leq\log\log\log X}p.
$$
For convenience, below we write $n\sim X$ for $X\leq n\leq 2X$.
The following lemma is one of the key ingredients of Maynard's sieve method.
\begin{lemma}\label{maynard}
\begin{align*}
&\sum_{\substack{d_0,\ldots,d_{k_0},e_0,\ldots,e_{k_0}\\
W,[d_0,e_0],\ldots,[d_{k_0},e_{k_0}]\text{ coprime}}}\frac{\lambda_{d_0,\ldots,d_{k_0}}\lambda_{e_0,\ldots,e_{k_0}}}{[d_0,e_0]\cdots[d_{k_0},e_{k_0}]}\\
=&\frac{1+o(1)}{(\log R)^{{k_0+1}}}\cdot\frac{W^{{k_0}+1}}{\phi(W)^{{k_0}+1}}\int_{\R^{{k_0}+1}}\bigg(\frac{\partial^{{k_0}+1}f(t_0,\ldots,t_{k_0})}{\partial t_0\cdots\partial t_{k_0}}\bigg)^{2}d t_0d t_1\cdots d t_{k_0}.
\end{align*}
\end{lemma}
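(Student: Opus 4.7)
The plan is to follow the Maynard-Tao diagonalization of the sieve's quadratic form, adapted to the parameterization of $\lambda_{\vec d}$ via $f$ rather than the more usual parameterization via its mixed partial derivative. First I would introduce the auxiliary variables
\[
y_{r_0,\ldots,r_{k_0}}:=\prod_{i=0}^{k_0}\mu(r_i)\phi(r_i)\sum_{\substack{d_0,\ldots,d_{k_0}\\ r_i\mid d_i\ \forall\,i}}\frac{\lambda_{d_0,\ldots,d_{k_0}}}{d_0\cdots d_{k_0}},
\]
which is invertible by M\"obius inversion. Using $(d_i,e_i)=\sum_{u_i\mid(d_i,e_i)}\phi(u_i)$ to rewrite $[d_i,e_i]=d_ie_i/(d_i,e_i)$, swapping the order of summation, and handling the pairwise coprimality among the $[d_i,e_i]$'s and coprimality to $W$ via M\"obius inversion (the off-diagonal contributions are of lower order since $f$ has compact support), the double sum on the left-hand side reduces to the diagonal form
\[
\sum_{\vec u}{}^{\ast}\,\frac{y_{\vec u}^{\,2}}{\prod_{i=0}^{k_0}\phi(u_i)},
\]
with $\ast$ restricting $\vec u$ to squarefree tuples coprime to $W$ and having $u_0,\ldots,u_{k_0}$ pairwise coprime.

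Next I would evaluate $y_{\vec u}$ asymptotically. Substituting the given $\lambda$, writing $d_i=u_is_i$ with $(s_i,u_iW)=1$, and simplifying gives
\[
y_{\vec u}=\prod_{i=0}^{k_0}\frac{\phi(u_i)}{u_i}\sum_{\vec s}{}^{\ast\ast}\prod_{i=0}^{k_0}\frac{\mu(s_i)}{s_i}f\Bigl(\frac{\log(u_0s_0)}{\log R},\ldots,\frac{\log(u_{k_0}s_{k_0})}{\log R}\Bigr).
\]
Expressing $f$ via its $(k_0+1)$-dimensional Mellin-Perron representation and using that $\sum_{(s,q)=1}\mu(s)s^{-1-z/\log R}$ vanishes at $z=0$ with derivative proportional to $(q/\phi(q))/\log R$, each of the $k_0+1$ sums over $s_i$ contributes a factor of $(W/\phi(W))(u_i/\phi(u_i))/\log R$ together with one partial derivative of $f$. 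Iterating yields
\[
y_{\vec u}=\Bigl(\frac{W}{\phi(W)\log R}\Bigr)^{k_0+1}F\Bigl(\frac{\log u_0}{\log R},\ldots,\frac{\log u_{k_0}}{\log R}\Bigr)+O\bigl((\log R)^{-(k_0+1)-\delta}\bigr),
\]
where $F:=\partial^{k_0+1}f/(\partial t_0\cdots\partial t_{k_0})$.

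Finally, combining with the standard Mertens-type estimate
\[
\sum_{\vec u}{}^{\ast}\frac{1}{\prod_{i=0}^{k_0}\phi(u_i)}G\Bigl(\frac{\log u_0}{\log R},\ldots,\frac{\log u_{k_0}}{\log R}\Bigr)\sim\Bigl(\frac{\phi(W)}{W}\Bigr)^{k_0+1}(\log R)^{k_0+1}\int_{\R^{k_0+1}}G(\vec t)\,d\vec t
\]
applied with $G=F^2$ and substituting the formula for $y_{\vec u}$, the factor $(W/\phi(W))^{2(k_0+1)}$ coming from $y_{\vec u}^{\,2}$ combines with $(\phi(W)/W)^{k_0+1}$ from the Mertens sum to produce the asserted $(W/\phi(W))^{k_0+1}$, and the powers of $\log R$ align to give the factor $(\log R)^{-(k_0+1)}$. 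The main obstacle will be the asymptotic evaluation of $y_{\vec u}$: one must execute the $(k_0+1)$-dimensional contour shift, pushing past the pole of $\zeta$ in each variable, and verify that the Euler factors at primes dividing $u_iW$ combine into the clean product stated above uniformly in $\vec u$ with an acceptable error term.
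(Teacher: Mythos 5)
The paper does not prove this lemma itself; it simply cites \cite[Lemma 30]{Pol14}, and the argument used there is exactly the Maynard--Tao diagonalization you sketch. Your outline is therefore the right route: passing to the $y_{\vec r}$ variables, diagonalizing the quadratic form, evaluating $y_{\vec u}$ asymptotically, and feeding the result into the Mertens-type multi-dimensional sum. The bookkeeping of the $W/\phi(W)$ and $\log R$ powers that you carry out at the end is correct and reproduces the stated constant.

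Two small points where the justification is thinner than it needs to be. First, after you insert the M\"obius expansions for the pairwise-coprimality conditions among the $[d_i,e_i]$ (and for coprimality to $W$), the smallness of the off-diagonal terms is not a consequence of compact support of $f$ per se. The actual mechanism is that any correction prime $p$ must satisfy $p\nmid W$, hence $p>\log\log\log X$; the resulting extra factor $\sum_{p>\log\log\log X}p^{-2+o(1)}$ makes those contributions $o(1)$ of the main term. Compact support merely keeps the sums finite. Second, your formula for $y_{\vec u}$ in terms of $\lambda$ tacitly replaces $\lambda_{\vec d}$ by its restriction to tuples $\vec d$ that are pairwise coprime and coprime to $W$ (so that $(s_i,u_iW)=1$ holds in the $d_i=u_is_i$ decomposition). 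This replacement is legitimate because the constraints in the lemma's sum force exactly that support, but it should be stated, otherwise the $W/\phi(W)$ factor appears out of nowhere. Finally, when executing the contour shift you correctly flag as the main technical burden, one must verify uniformity in $\vec u$ over the range $\prod u_i\le R$ and that the Euler products at $p\mid u_iW$ contribute only $(1+O(1/D_0))$ multiplicative corrections; these are the error terms that Polymath controls via the parameter they call $D_0$.
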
\begin{proof}
See \cite[Lemma 30]{Pol14}.
\end{proof}
Set
$$
\gamma=\frac1c.
$$
The conventional way to capture the Piatetski-Shapiro primes is to use the Fourier expansion of $\{x\}$ and the fact
$$
[(n+1)^\gamma]-[n^\gamma]=\begin{cases}1,&\text{if }n\in\N^c,\\
0&\text{otherwise},
\end{cases}
$$
when $\{n^\gamma\}>0$.
However, notice that $[(n+1)^\gamma]-[n^\gamma]=1$ if and only if $$
\{n^\gamma\}>1-((n+1)^\gamma-n^\gamma)=1-\gamma n^{\gamma-1}+O(n^{\gamma-2}).
$$
So if $\{n^\gamma\}$ lies in the short interval $[1-\delta n^{\gamma-1},1)$ for some constant $0<\delta<\gamma$, then $n\in\N^c$.
The following lemma is a classical result in number theory, and is frequently used for the problems of Diophantine approximation.
\begin{lemma}\label{smooth}
Suppose that $0\leq \alpha<\beta\leq1$ and $\Delta>0$ with $2\Delta<\beta-\alpha$. For any $r\geq 1$, there exists a smooth function $\psi$ with the period $1$ satisfying that\medskip

\noindent(i)
$\psi(x)=1$ if $\alpha+\Delta\leq\{x\}\leq\beta-\Delta$,
$\psi(x)=0$ if $\{x\}\leq\alpha$ or $\{x\}\geq\beta$, and $\psi(x)\in[0,1]$ otherwise;\medskip

\noindent(ii)
$$\psi(x)=(\beta-\alpha-\Delta)+\sum_{|j|\geq 1}a_je(jx),$$
where
$$
a_j\ll_r\min\bigg\{\frac1{|j|},\ \beta-\alpha-\Delta,\ \frac1{\Delta^{r}|j|^{r+1}}\bigg\}.
$$
\end{lemma}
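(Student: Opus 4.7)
The plan is to take $\psi$ to be the convolution of the characteristic function of a slightly shortened version of $[\alpha,\beta]$ with a smooth bump of width $\Delta$. Fix once and for all a nonnegative $\phi\in C^\infty(\R)$ supported in $[-1/2,1/2]$ with $\int_\R\phi = 1$, and put $\phi_\Delta(x)=\Delta^{-1}\phi(x/\Delta)$, so that $\phi_\Delta\geq 0$ is supported in $[-\Delta/2,\Delta/2]$, integrates to $1$, and satisfies $\|\phi_\Delta^{(r)}\|_{L^1(\R)}\ll_r \Delta^{-r}$ for every $r\geq 0$. Define
$$
\psi(x) = \sum_{m\in\Z}\int_{\alpha+\Delta/2}^{\beta-\Delta/2}\phi_\Delta(x-m-y)\,dy,
$$
a smooth $1$-periodic function; it is the periodic convolution of $\phi_\Delta$ against the $1$-periodic extension of $\1_{[\alpha+\Delta/2,\beta-\Delta/2]}$.

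Property (i) is a direct support check. If $\{x\}\in[\alpha+\Delta,\beta-\Delta]$, then the bump $y\mapsto\phi_\Delta(\{x\}-y)$ is supported inside $[\alpha+\Delta/2,\beta-\Delta/2]$, only the term $m=[x]$ survives, and it contributes $\int\phi_\Delta=1$. If $\{x\}\notin[\alpha,\beta]$, no translate of the bump meets $[\alpha+\Delta/2,\beta-\Delta/2]$, so $\psi(x)=0$. In the two transition strips of width $\Delta$ the value lies in $[0,1]$ by the nonnegativity and normalization of $\phi_\Delta$.

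For (ii), viewing $\psi$ as a periodic convolution, its Fourier coefficients factor: $a_j=\widehat{\phi_\Delta}(j)\cdot I_j$, where
$$
I_j := \int_{\alpha+\Delta/2}^{\beta-\Delta/2} e(-jy)\,dy, \qquad \widehat{\phi_\Delta}(j) := \int_\R \phi_\Delta(v)e(-jv)\,dv.
$$
In particular $a_0 = \beta-\alpha-\Delta$ as claimed. For $j\neq 0$, the three parts of the displayed bound arise from three independent estimates. The trivial bounds $|\widehat{\phi_\Delta}(j)|\leq 1$ and $|I_j|\leq\beta-\alpha-\Delta$ yield $|a_j|\leq\beta-\alpha-\Delta$. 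Evaluating $I_j$ explicitly gives $|I_j|\leq 1/(\pi|j|)$, hence $|a_j|\ll 1/|j|$. Finally, $r$-fold integration by parts in $\widehat{\phi_\Delta}(j)$ produces a factor $(2\pi|j|)^{-r}$ together with $\phi_\Delta^{(r)}$ of $L^1$-norm $O_r(\Delta^{-r})$, so $|\widehat{\phi_\Delta}(j)|\ll_r (|j|\Delta)^{-r}$; combining with $|I_j|\ll 1/|j|$ gives $|a_j|\ll_r 1/(\Delta^r|j|^{r+1})$.

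There is no substantial obstacle here: this is a textbook mollification, standard for smoothing the characteristic function of an interval so as to obtain rapidly decaying Fourier coefficients. The only point deserving attention is keeping the three Fourier bounds separate, since each of them will be invoked in a different range of $|j|$ in Section 3, when the series $\sum_j a_j e(j n^\gamma)$ must be truncated before exponential-sum estimates are applied to capture the Piatetski-Shapiro primes.
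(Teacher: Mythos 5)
Your proof is correct, and it gives a clean, self-contained argument where the paper simply cites Vinogradov (Lemma 12 of Chapter I of \cite{Vi54}) without reproducing the construction. Vinogradov's classical construction takes the indicator of the shortened interval and convolves it $r$ times with a box function of width $\Delta/r$; this produces a piecewise-polynomial spline which is only $C^{r-1}$, and a \emph{different} function is built for each $r$. You instead convolve once with a fixed $C^\infty$ mollifier $\phi_\Delta$ of width $\Delta$, obtaining a single genuinely $C^\infty$ function whose Fourier coefficients satisfy all three bounds simultaneously, with the $r$-dependence pushed into the implied constant $\|\phi^{(r)}\|_{L^1}$. Both routes hinge on the same two facts -- Fourier coefficients of a convolution multiply, and $r$-fold integration by parts converts $L^1$-control of $\phi_\Delta^{(r)}$ into the decay $(|j|\Delta)^{-r}$ -- and both deliver exactly the stated estimate. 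Your version is arguably tidier (one function for all $r$, no case distinction on how many convolutions are needed), and you correctly keep the three bounds on $a_j$ as separate estimates, since, as you note, each is used in a different range of $|j|$ when the Fourier series for $\chi$ and $\psi$ is truncated in Section 2.
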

\begin{proof}
This is \cite[Lemma 12 of Chapter I]{Vi54}.
\end{proof}
\begin{lemma}\label{bf92exp}
Let
$$
f_j(x)=jx^{\gamma}+C_1x+C_2x^{1-\gamma},
$$
where $C_1, C_2$ are constants and $|C_2|=o(X^{2\gamma-1})$. Suppose that $\sigma>0$ and $9(1-\gamma)+12\sigma<1$. Then there exists a sufficiently small $\epsilon>0$ (only depending on $c$ and $\sigma$),
such that
\begin{equation}
\min\bigg\{1,\frac{X^{1-\gamma}}{H}\bigg\}\cdot\sum_{j\sim H}\bigg|\sum_{n\sim X}\Lambda(n)e\big(f_j(n)\big)\bigg|\ll_\epsilon X^{1-\sigma-\epsilon},
\end{equation}
where
$1\leq H\leq X^{1-\gamma+\sigma+\epsilon}$.
\end{lemma}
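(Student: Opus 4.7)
The plan is to reduce the sum to bilinear forms via a combinatorial identity for $\Lambda$ and then to apply van der Corput's method, exploiting the averaging over $j\sim H$ to obtain the extra factor $\min\{1,X^{1-\gamma}/H\}$. I would begin by applying Vaughan's identity (or Heath--Brown's identity) with splitting parameters chosen as small powers of $X$, writing
$$
\sum_{n\sim X}\Lambda(n)e\bigl(f_j(n)\bigr)
$$
as Type I pieces of shape $\sum_{d\leq D}a_d\sum_{m\sim X/d}e(f_j(dm))$ plus Type II pieces of shape $\sum_{m\sim M}\sum_{n\sim N}\alpha_m\beta_n e(f_j(mn))$, with $MN\asymp X$ and the ranges of $M, N, D$ controlled by the identity.

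For the Type I sums, I would estimate the inner $m$-sum by van der Corput's $k$-th derivative test. Since
$$
f_j^{(k)}(y)\asymp jy^{\gamma-k}+C_2 y^{1-\gamma-k},
$$
the hypothesis $|C_2|=o(X^{2\gamma-1})$ ensures that the $C_2$-term is negligible for $y\sim X$ and $j\geq 1$, so the relevant size of the third derivative of $f_j(dm)$ at $m\sim X/d$ is $\asymp jd^{3}X^{\gamma-3}$. Invoking the standard third-derivative exponent pair $(\tfrac12,\tfrac12)$ and summing over $d$ and $j$ then yields a bound of the desired shape, provided $D$ is suitably small. For the Type II sums I would apply Cauchy--Schwarz in $m$ to smooth out $\beta_n$, reducing to
$$
\sum_{n,n'\sim N}\overline{\beta_n}\beta_{n'}\sum_{m\sim M}e\bigl(f_j(mn)-f_j(mn')\bigr),
$$
and then estimate the inner $m$-sum by an $A$-process followed by an exponent pair, using that its leading derivative in $m$ is of size $\asymp j|n-n'|X^{\gamma-1}/N$.

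The averaging over $j\sim H$ supplies the factor $\min\{1,X^{1-\gamma}/H\}$: a further Cauchy--Schwarz in $j$ produces difference phases $(j-j')n^{\gamma}$ whose derivative in $n$ is $\asymp|j-j'|X^{\gamma-1}$, which exceeds $1$ precisely when $H>X^{1-\gamma}$, at which point Kusmin--Landau gives genuine cancellation; for $H\leq X^{1-\gamma}$ the minimum is simply $1$ and no $j$-saving is needed. The main obstacle is the bookkeeping required to choose the Vaughan split points together with the Type II ranges $M, N$ and the exponent pair so that the resulting bound is uniform in $1\leq H\leq X^{1-\gamma+\sigma+\epsilon}$. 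The condition $9(1-\gamma)+12\sigma<1$ should appear as the feasibility inequality from the worst Type II range, where three derivative tests contribute the factor $9$ in front of $(1-\gamma)$ and the length of the $j$-sum combined with the Cauchy--Schwarz losses contribute the $12\sigma$. A final technical point, easily dispatched, is to check that the $C_2 y^{1-\gamma}$ term can be absorbed into the error uniformly throughout these computations, which is immediate from $|C_2|=o(X^{2\gamma-1})$.
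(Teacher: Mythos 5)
Your proposal attempts to prove the lemma from scratch, but the paper's own ``proof'' is a one-line citation: this is equation (2.10) of Balog and Friedlander's paper \cite{BF92}, which treats the phase $f_j(x)=jx^\gamma+C_1x$. The only new content in the paper's argument is the observation that adding $C_2x^{1-\gamma}$ with $|C_2|=o(X^{2\gamma-1})$ changes nothing, because the Balog--Friedlander proof only uses the asymptotic $f_j''(x)\asymp \gamma(\gamma-1)jx^{\gamma-2}$, and the contribution $C_2(1-\gamma)(-\gamma)x^{-1-\gamma}=o(X^{\gamma-2})$ to $f_j''$ is dominated by the $j$-term for every $j\geq 1$. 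You did correctly identify this point---your remark that $|C_2|=o(X^{2\gamma-1})$ makes the extra term negligible throughout the derivative computations is exactly the observation the authors make.

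Beyond that, however, your reconstruction is too speculative to count as a proof. You are essentially re-deriving a multi-page estimate from \cite{BF92}, and the places where the details actually matter are glossed over: you never pin down the Vaughan/Heath--Brown splitting parameters, the Type I/Type II ranges, or the exponent pairs, and you never verify that the resulting inequalities close. Most tellingly, your explanation of the hypothesis $9(1-\gamma)+12\sigma<1$---``three derivative tests contribute the factor $9$'' and ``Cauchy--Schwarz losses contribute the $12\sigma$''---is reverse-engineered numerology rather than a derived feasibility condition; nothing in the sketch actually produces those coefficients. The discussion of the $\min\{1,X^{1-\gamma}/H\}$ factor is also muddled: moving it to the right-hand side, the lemma asks for a \emph{weaker} bound when $H>X^{1-\gamma}$, so that is the easier regime, not the one where ``genuine cancellation'' from Kusmin--Landau must be extracted, and Kusmin--Landau in any case requires control of the fractional part of the derivative, not merely that it exceed $1$. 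If you want to prove the lemma rather than cite it, you would need to carry out the bilinear estimates with explicit ranges and track exactly where $9(1-\gamma)+12\sigma<1$ enters; as written, the argument does not establish the stated bound.
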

\begin{proof}
This is just (2.10) of \cite{BF92}, although Balog and Friedlander only considered $f_j(x)=jx^{\gamma}+C_1x$. In fact, in their proof, only the fact
$
f_j''(x)\approx \gamma(\gamma-1)\cdot jx^{\gamma-2}
$
is used. So the same discussions are also valid for $f_j(x)=jx^{\gamma}+C_1x+C_2x^{1-\gamma}$.
\end{proof}
Suppose that $\epsilon_0$ is the $\epsilon$ corresponding to $c$ and $\sigma=8\sigma_0$ in Lemma \ref{bf92exp}.
Let
$$
\delta_0=\frac{c}{9},\qquad\eta_0=\frac{c\delta_0}{16{k_0}}.
$$
Suppose that
$\chi$ is the smooth function described in Lemma \ref{smooth} with
$$
\alpha=1-2\delta_0 X^{\gamma-1},\ \beta=1-\delta_0 X^{\gamma-1},\ \Delta=\frac{\beta-\alpha}{4},\ r=[100\epsilon_0^{-1}].
$$
And let $\psi$ be the smooth function described in Lemma \ref{smooth} with
$$
\alpha=\eta_0,\quad \beta=2\eta_0,\quad \Delta=\frac{\beta-\alpha}{4},\quad r=[100\sigma_0^{-1}].
$$
Define
$$
\varpi(n)=\begin{cases}\log n,&\text{if }n\text{ is prime},\\
0,&\text{otherwise}.
\end{cases}
$$
For $n\in\N^c$ and $h\in\Z$, let
$$\fs_{h}(n)=[([n^{\gamma}]+h+1)^c].
$$
\begin{proposition}\label{maynardshapiro}
\begin{align}
&\sum_{\substack{n\sim X,\ n\in\N^c\\ \fs_{j}(n)\equiv 1\pmod{W}\\\text{for }0\leq j\leq {k_0}}}
\bigg(\sum_{h=0}^{k_0}\varpi(\fs_{h}(n))\chi(\fs_h(n)^{\gamma})\psi(c\fs_h(n)^{1-\gamma})\bigg)\cdot
\bigg(\sum_{\substack{d_i\mid \fs_{i}(n)\\ 0\leq i\leq {k_0}}}\lambda_{d_0,d_1,\ldots,d_{k_0}}\bigg)^2\notag\\
=&\frac{1+o(1)}{(\log R)^{{k_0}}}\cdot\frac{W^{{k_0}-1}}{\phi(W)^{{k_0}+1}}\cdot\frac{9\delta_0\eta_0({k_0}+1)X^{\gamma}}{16}
\int_{\R^{{k_0}}}\bigg(\frac{\partial^{{k_0}}f(0,t_1,\ldots,t_{k_0})}{\partial t_1\cdots\partial t_{k_0}}\bigg)^{2}d t_1\cdots d t_{k_0}.
\end{align}
\end{proposition}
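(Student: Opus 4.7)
The plan is to mimic Maynard's sieve calculation in the Piatetski--Shapiro setting, using Fourier analysis to separate the sieve from the fractional-part conditions and Lemma~\ref{bf92exp} to bound the resulting exponential sums. Since $f$ (and hence $\lambda_{d_0,\ldots,d_{k_0}}$) is symmetric, a permutation of indices shows that each $h\in\{0,\ldots,k_0\}$ contributes equally to the outer sum, so I fix $h=i_0$ and multiply by $k_0+1$ at the end. Expanding the square and setting $u_i=[d_i,e_i]$, the problem reduces to evaluating, for each $(d,e)$ with $W,u_0,\ldots,u_{k_0}$ pairwise coprime,
$$
S(d,e)=\sum_{\substack{n\sim X,\,n\in\N^c\\ u_i\mid\fs_i(n)\\ \fs_j(n)\equiv 1\pmod{W}}}\varpi(\fs_{i_0}(n))\chi(\fs_{i_0}(n)^{\gamma})\psi(c\fs_{i_0}(n)^{1-\gamma}).
$$

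Next, I would parametrize $n\in\N^c\cap[X,2X]$ by $n=[m^c]$ with $m\sim X^{\gamma}$, so that $\fs_i(n)=[(m+i)^c]$, and detect the divisibility conditions $u_i\mid[(m+i)^c]$ and the congruences mod $W$ by additive characters on $\Z/u_i\Z$ and $\Z/W\Z$. At the same time, Lemma~\ref{smooth} expands $\chi$ and $\psi$ into Fourier series with constant terms $c_{\chi}=3\delta_0 X^{\gamma-1}/4$ and $c_{\psi}=3\eta_0/4$. The zero-frequency (main) contribution reduces to a Siegel--Walfisz count of primes $[(m+i_0)^c]$ in an arithmetic progression modulo $W\prod_i u_i\leq WR^2=X^{o(1)+2\sigma_0}$, well within the range of Siegel--Walfisz. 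Inserting this count into the sum over $(d,e)$ and applying the standard Maynard--Tao calculation --- in which the variable $t_{i_0}$ is set to $0$ and the corresponding derivative disappears because the prime $\fs_{i_0}(n)$ forces $d_{i_0}=e_{i_0}=1$ --- produces the stated right-hand side, with the product $c_{\chi}c_{\psi}$ and the length $X^{\gamma}$ of the $m$-range supplying the prefactor $9\delta_0\eta_0 X^{\gamma}/16$.

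The principal technical obstacle is the error from nonzero Fourier and character frequencies. After Taylor-expanding $[(m+i)^c]^{\gamma}$ and $[(m+i)^c]^{1-\gamma}$ around $m+i$, each surviving exponential sum has phase of the form $jm^{\gamma}+C_1 m+C_2 m^{1-\gamma}$ (with negligible lower-order corrections), exactly matching the hypothesis of Lemma~\ref{bf92exp}. The smoothness parameters in Lemma~\ref{smooth} (the large $r$ and the widths $\Delta$) truncate the Fourier frequencies at $|j|\ll X^{1-\gamma+\sigma+\epsilon}$ with $\sigma=8\sigma_0$, which is exactly the admissible range for Lemma~\ref{bf92exp}. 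The hypothesis $9(1-\gamma)+12\sigma<1$ --- equivalent to $c<9/8$ for the present choice of $\sigma_0$ --- then yields a power saving $X^{-\epsilon}$ per sum, which after summing over all frequencies and over $(d,e)$ is absorbed into the $o(1)$ error.
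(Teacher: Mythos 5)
Your overall plan — reduce by symmetry to a single index $i_0$, expand the sieve square, detect divisibility by additive characters, Fourier-expand $\chi$ and $\psi$, bound the nonconstant frequencies by Lemma~\ref{bf92exp}, and extract a Siegel--Walfisz main term — is the same architecture the paper uses, and your computation of the constants $\alpha_0=3\delta_0X^{\gamma-1}/4$, $\beta_0=3\eta_0/4$ leading to the factor $9\delta_0\eta_0X^{\gamma}/16$ is correct in spirit. There are, however, two substantial gaps.

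First, and most importantly, Lemma~\ref{bf92exp} requires the phase to contain a genuine $jn^{\gamma}$ term with $j\geq 1$; it says nothing about the \emph{zero} Fourier coefficient of $\chi$. After expanding $\chi$ you must control
\[
\alpha_0\sum_{|j_2|\leq X^{\sigma_0}}\beta_{j_2}\sum_{n\sim X,\,n\equiv1\pmod{W}}\varpi(n)\,e\bigl(n\theta_1+(j_2+h+\theta_2)\,c\,n^{1-\gamma}\bigr)
\]
whenever $(\theta_1,\theta_2)\neq(0,0)$. Here the $n^{\gamma}$ term is absent, so Lemma~\ref{bf92exp} is inapplicable. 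The paper handles this with a separate argument: a lower bound $\|\theta_2\|\geq(WX^{2\sigma_0})^{-1}$ coming from the coprimality of $W,[d_1,e_1],\ldots,[d_{k_0},e_{k_0}]$, followed by Heath-Brown's identity, a Type~I / Type~II decomposition, and van der Corput's inequality to obtain a power saving. This occupies roughly half of the paper's proof of the proposition and cannot be subsumed into the ``absorbed into $o(1)$'' remark. Your proposal, as written, would not get past this term.

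Second, the floor function $[cn^{1-\gamma}]$ appears inside the detecting exponential (because $\fs_i(n)=n+i[cn^{1-\gamma}]$ on the support of $\chi\psi$), so one also has to Fourier-expand $e(-\theta_2\{cn^{1-\gamma}\})$, with the small amplitude $|\fc(\theta_2)|\leq\|\theta_2\|$ controlling the number of extra frequencies $h$; this is the paper's Lemma~\ref{fit} and it feeds directly into both the bf92exp-range and the Heath--Brown/van der Corput argument. Your sketch omits this step entirely.

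A more minor point: the paper does not parametrize by $n=[m^c]$, $m\sim X^{\gamma}$, as you propose. Instead it sets $m=\fs_h(n)\sim X$ and uses Lemma~\ref{shnchi} to rewrite the $h$-th summand as a sum over integers $m\sim X$ weighted by $\varpi(m)\chi(m^{\gamma})\psi(cm^{1-\gamma})$, which puts the prime variable directly in the range required by Lemma~\ref{bf92exp} and by Siegel--Walfisz. Your parametrization is workable but requires an additional change of variable before those lemmas can be applied, and your phrase ``Siegel--Walfisz count in an arithmetic progression modulo $W\prod_i u_i$'' is misleading: the main term is a Siegel--Walfisz count mod~$W$ alone, with the $1/\prod_i u_i$ factors coming out of the character averages, not from an extension of the modulus.
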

Assume that $n\sim X$ and
$$1-\frac{\delta_0}4 X^{\gamma-1}>\{n^{\gamma}\}> 1-4\delta_0 X^{\gamma-1}.$$
Then
$$
[(n+1)^{\gamma}]=[n^{\gamma}+\gamma n^{\gamma-1}+O(n^{\gamma-2})]>[n^{\gamma}].
$$
So $n\in\N^c$.
Furthermore, if $h$ is an integer with $|h|\leq {k_0}$,
we also have
\begin{align*}
&([n^{\gamma}]+h+1)^c=(n^{\gamma}-\{n^{\gamma}\}+h+1)^c\\
=&n+chn^{1-\gamma}+
c(1-\{n^{\gamma}\})n^{1-\gamma}
+O(n^{1-2\gamma})\\
=&n+h\cdot[cn^{1-\gamma}]+h\cdot\{cn^{1-\gamma}\}+
c(1-\{n^{\gamma}\})n^{1-\gamma}
+O(n^{1-2\gamma}).
\end{align*}
Clearly
$$
c(1-\{n^{\gamma}\})n^{1-\gamma}\leq 4c\delta_0 X^{\gamma-1}\cdot (2X)^{1-\gamma}\leq 2^{3-\gamma}c\delta_0
$$
and
$$
c(1-\{n^{\gamma}\})n^{1-\gamma}\geq \frac{c\delta_0}4 X^{\gamma-1} \cdot X^{1-\gamma}=\frac{c\delta_0}4.
$$
On the other hand, if
$$
\eta_0<\{cn^{1-\gamma}\}< 2\eta_0,
$$
then we have
$$
\{h\cdot cn^{1-\gamma}\}=h\cdot\{cn^{1-\gamma}\}\leq 2\eta_0{k_0}<\frac{c\delta_0}7
$$
for those $0\leq h\leq{k_0}$,
i.e.,
$$
\frac{c\delta_0}4\leq h\cdot \{cn^{1-\gamma}\}+c(1-\{n^{\gamma}\})n^{1-\gamma}\leq 1-\frac{\delta_0}2.
$$
Similarly, if $-{k_0}\leq h<0$, then
$$
-\eta_0\geq h\cdot\{cn^{1-\gamma}\}\geq -2\eta_0{k_0}\geq -\frac{c\delta_0}8
$$
and
$$
\{h\cdot cn^{1-\gamma}\}=1+h\cdot\{cn^{1-\gamma}\}\geq 1-\frac{c\delta_0}8.
$$
It follows that
$$
\frac{c\delta_0}9<h\cdot\{cn^{1-\gamma}\}+c(1-\{n^{\gamma}\})n^{1-\gamma}
\leq 2^{3-\gamma}c\delta_0<1
$$
and
$$
\{h\cdot cn^{1-\gamma}\}+c(1-\{n^{\gamma}\})n^{1-\gamma}\geq1+\frac{c\delta_0}8.
$$
Noting that
$$
(n^{\gamma}+h)^c=n+h\cdot cn^{1-\gamma}+O(n^{1-2\gamma}),
$$
we get
\begin{equation}\label{n1chc}
[([n^{\gamma}]+h+1)^c]=n+h\cdot[cn^{1-\gamma}]=\begin{cases}[(n^{\gamma}+h)^c],&\text{if }0\leq h\leq k_0,\\
[(n^{\gamma}+h)^c]+1,&\text{if }-k_0\leq h<0.\\
\end{cases}
\end{equation}
That is,
\begin{lemma}\label{nchi} If $n\sim X$,
$$1-\frac{\delta_0}4 X^{\gamma-1}>\{n^{\gamma}\}> 1-4\delta_0 X^{\gamma-1}$$
and $$\eta_0<\{cn^{1-\gamma}\}< 2\eta_0,$$
then for $h\in[-{k_0},{k_0}]$, we have
$$
\fs_{h}(n)=n+h\cdot[cn^{1-\gamma}].
$$
\end{lemma}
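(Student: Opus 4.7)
The plan is to Taylor expand $([n^\gamma]+h+1)^c$ around $n^\gamma$, exploit the identity $c\gamma=1$ so that the leading term is exactly the integer $n$, and then check that the total remainder is a real number strictly between $0$ and $1$. Once that is done, the floor operation commits to the integer $n+h[cn^{1-\gamma}]$ claimed by the lemma, for every $h\in[-k_0,k_0]$.

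First I would write $[n^\gamma]+h+1 = n^\gamma+(h+1-\{n^\gamma\})$ and apply the binomial expansion to get
$$
([n^\gamma]+h+1)^c = n + c(h+1-\{n^\gamma\})\,n^{1-\gamma} + O(n^{1-2\gamma}),
$$
with the implied constant depending only on $c$ and $k_0$ (this is valid because $|h+1-\{n^\gamma\}|\le k_0+1$ while $n^\gamma\to\infty$, and $\gamma>1/2$ makes the error negligible). Splitting $chn^{1-\gamma}=h[cn^{1-\gamma}]+h\{cn^{1-\gamma}\}$ isolates the desired integer part and leaves
$$
E_h := h\{cn^{1-\gamma}\} + c(1-\{n^\gamma\})n^{1-\gamma} + O(n^{1-2\gamma}).
$$
It then suffices to prove $0<E_h<1$ for every $h\in[-k_0,k_0]$.

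The only delicate point is this two-sided bound, which I would handle by a short case analysis on the sign of $h$. The hypothesis on $\{n^\gamma\}$ together with $n\sim X$ sandwiches $c(1-\{n^\gamma\})n^{1-\gamma}$ between $c\delta_0/4$ and $2^{3-\gamma}c\delta_0$; the definitions $\delta_0=c/9$ and $c<9/8$ are precisely what force the upper bound to remain strictly below $1$ (numerically it is around $0.62$), and this is really where the restriction on $c$ is used. The hypothesis on $\{cn^{1-\gamma}\}$ together with $\eta_0=c\delta_0/(16k_0)$ gives $|h\{cn^{1-\gamma}\}|\le 2\eta_0 k_0=c\delta_0/8$ uniformly in $|h|\le k_0$. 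For $0\le h\le k_0$ both summands of $E_h$ are nonnegative, so $E_h\ge c\delta_0/4+o(1)>0$ and $E_h\le (1/8+2^{3-\gamma})c\delta_0+o(1)<1$; for $-k_0\le h<0$ the first summand becomes negative but is of magnitude at most $c\delta_0/8<c\delta_0/4$, so $E_h>c\delta_0/8+o(1)>0$ while $E_h<2^{3-\gamma}c\delta_0+o(1)<1$. This closes the plan; no serious obstacle appears beyond careful bookkeeping and the verification that the chosen $\delta_0,\eta_0$ are calibrated tightly enough against $c<9/8$ and $|h|\le k_0$.
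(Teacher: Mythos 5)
Your proposal is correct and follows essentially the same route as the paper: expand $([n^\gamma]+h+1)^c$ around $n^\gamma$ using $c\gamma=1$, split $chn^{1-\gamma}$ into integer and fractional parts, and verify via the case analysis on the sign of $h$ that the remainder $E_h$ lies strictly in $(0,1)$, using the calibration $\delta_0=c/9$, $\eta_0=c\delta_0/(16k_0)$, and $c<9/8$. The paper records a few auxiliary consequences (e.g., $n\in\N^c$, and the identification of $\fs_h(n)$ with $[(n^\gamma+h)^c]$ or $[(n^\gamma+h)^c]+1$) that are reused later, but the core argument and the bounds on $c(1-\{n^\gamma\})n^{1-\gamma}$ and $h\{cn^{1-\gamma}\}$ are identical to yours.
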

Assume that $n\sim X$ and $n\in\N^c$. Suppose that $0\leq h\leq{k_0}$ and let $m=\fs_h(n)$, i.e, $$m=[([n^{\gamma}]+h+1)^c].$$
Clearly
$$
m^{\gamma}-h-1< [n^{\gamma}]<(m+1)^{\gamma}-h-1=
m^{\gamma}-h-1+O(m^{\gamma-1}),
$$
when $\chi(m^{\gamma})>0$.
It follows that
\begin{equation}\label{ngmgh}
[n^{\gamma}]=[m^{\gamma}]-h.
\end{equation}
Assume that $\chi(m^{\gamma}),\ \psi(cm^{1-\gamma})>0$.
Then by Lemma \ref{nchi},
$$
[([n^{\gamma}]+h^*+1)^c]=[([m^{\gamma}]-h+h^*+1)^c]=m+(h^*-h)\cdot[cm^{1-\gamma}]
$$
for each $0\leq h^*\leq {k_0}$.

Furthermore, we must have $\{n^{\gamma}\}>0$. In fact, if $\{n^{\gamma}\}=0$, then
\begin{align}\label{ngamma0}
n=&([m^{\gamma}]-h)^c=m-(h+1)\cdot cm^{1-\gamma}+
c(1-\{m^{\gamma}\})m^{1-\gamma}
+O(m^{1-2\gamma})\notag\\
=&
m-(h+1)\cdot[cm^{1-\gamma}]+
c(1-\{m^{\gamma}\})m^{1-\gamma}-(h+1)\cdot\{cm^{1-\gamma}\}+O(m^{1-2\gamma}).
\end{align}
Note that $2\delta_0X^{\gamma-1}\geq 1-\{m^{\gamma}\}\geq \delta_0X^{\gamma-1}$ and $\{cm^{1-\gamma}\}\leq 2\eta_0$,
(\ref{ngamma0}) is impossible since
$$
2(k_0+1)\eta_0<c\delta_0/2<1/4.
$$
Since $n\in\N^c$, in view of (\ref{n1chc}) and (\ref{ngmgh}), we must have
$$
n=[([m^{\gamma}]-h+1)^{c}]=[(m^{\gamma}-h)^{c}]+1
$$
provided $h\geq 1$. Then
\begin{equation}\label{m1cn1c}
m^{\gamma}-h< n^{\gamma}\leq((m^{\gamma}-h)^{c}+1)^{\gamma}.
\end{equation}
But
\begin{align*}
((m^{\gamma}-h)^{c}+1)^{\gamma}=
(m^{\gamma}-h)+\gamma m^{\gamma-1}+O(m^{-1}),
\end{align*}
so in view of (\ref{ngmgh}), we must have
$$
\{n^{\gamma}\}>\{m^{\gamma}\}\geq 1-2\delta_0 X^{\gamma-1}.
$$
Moreover, we also have
$$
n^{1-\gamma}=(m-h\cdot[cm^{1-\gamma}])^{1-\gamma}=m^{1-\gamma}+O(m^{1-2\gamma}).
$$
Thus
\begin{lemma}\label{shnchi}
Suppose that $n\sim X$, $n\in\N^c$ and $1\leq h,h^*\leq{k_0}$. If
$$\chi(\fs_h(n)^{\gamma}),\ \psi(c\cdot\fs_h(n)^{1-\gamma})>0,$$ then
$$
\fs_{h^*}(n)=\fs_{h^*-h}(\fs_h(n)).
$$
We also have
$$
\{n^{\gamma}\}\geq1-2\delta_0 X^{\gamma-1}
$$
and
$$
\{cn^{1-\gamma}\}=\{c\cdot\fs_h(n)^{1-\gamma}\}+O(X^{1-2\gamma}).
$$
\end{lemma}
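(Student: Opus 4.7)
The plan is to set $m := \fs_h(n)$ and to unpack the two positivity hypotheses via Lemma \ref{smooth}(i) into the explicit inequalities $\{m^\gamma\} \in (1 - 2\delta_0 X^{\gamma-1},\, 1 - \delta_0 X^{\gamma-1})$ and $\{cm^{1-\gamma}\} \in (\eta_0,\, 2\eta_0)$. These are exactly the hypotheses for Lemma \ref{nchi} applied with $m$ in place of $n$, so we immediately gain the closed-form identity $\fs_{h'}(m) = m + h'[cm^{1-\gamma}]$ for all $|h'|\leq k_0$. This identity is the main computational engine for everything that follows.

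First I would pin down $[n^\gamma] = [m^\gamma] - h$. From the defining equation $m = [([n^\gamma]+h+1)^c]$ one has $m^\gamma \leq [n^\gamma]+h+1 < (m+1)^\gamma = m^\gamma + O(m^{\gamma-1})$; combined with $\{m^\gamma\}$ sitting within $O(X^{\gamma-1})$ of $1$, this forces the claimed equality for large $X$. The compositional identity $\fs_{h^*}(n) = \fs_{h^*-h}(\fs_h(n))$ then drops out immediately from the definition of $\fs$, since $[([n^\gamma]+h^*+1)^c] = [([m^\gamma]+(h^*-h)+1)^c]$ and $h^* - h \in [-k_0, k_0]$ lies in the range where Lemma \ref{nchi} was applicable to $m$.

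Next I would establish $\{n^\gamma\} \geq 1 - 2\delta_0 X^{\gamma-1}$ in two moves. First rule out $\{n^\gamma\}=0$: under this hypothesis $n = ([m^\gamma]-h)^c$, which Taylor-expands to
$$
n = m - (h+1)[cm^{1-\gamma}] + c(1-\{m^\gamma\})m^{1-\gamma} - (h+1)\{cm^{1-\gamma}\} + O(X^{1-2\gamma}).
$$
Since the first two terms are already integers, the residual would have to be an integer too, but the choices $\delta_0 = c/9$ and $\eta_0 = c\delta_0/(16k_0)$ confine this residual strictly to $(0,1)$, a contradiction. Granted $\{n^\gamma\}>0$, the hypothesis $n\in\N^c$ together with $[n^\gamma]=[m^\gamma]-h$ and the $h<0$ branch of (\ref{n1chc}) identifies $n = [(m^\gamma - h)^c] + 1$, giving
$$
m^\gamma - h < n^\gamma < ((m^\gamma - h)^c + 1)^\gamma = (m^\gamma - h) + \gamma m^{\gamma-1} + O(m^{-1}).
$$
Subtracting $[n^\gamma] = [m^\gamma] - h$ yields $\{n^\gamma\} > \{m^\gamma\} > 1 - 2\delta_0 X^{\gamma-1}$.

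For the third assertion, Lemma \ref{nchi} applied at $h' = -h$, combined with the $h' < 0$ case of (\ref{n1chc}), identifies $n$ as $\fs_{-h}(m) = m - h[cm^{1-\gamma}]$. A first-order binomial expansion then gives $n^{1-\gamma} = m^{1-\gamma} + O(m^{1-2\gamma})$ and hence $cn^{1-\gamma} = cm^{1-\gamma} + O(X^{1-2\gamma})$; since $\{cm^{1-\gamma}\} \in (\eta_0, 2\eta_0)$ is bounded away from the integers, passing to fractional parts preserves this error. I expect the main obstacle to be the integer-exclusion argument for $\{n^\gamma\} = 0$: it depends sensitively on the numerology of $\delta_0$ and $\eta_0$, which are engineered precisely so that the $\{cm^{1-\gamma}\}$ contribution (of order $k_0\eta_0 \ll \delta_0$) cannot cancel the main term $c(1-\{m^\gamma\})m^{1-\gamma}$ of order $\delta_0$.
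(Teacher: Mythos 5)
Your proof follows the paper's argument step for step: set $m = \fs_h(n)$, apply Lemma \ref{nchi} to $m$ to obtain the explicit form $\fs_{h'}(m) = m + h'[cm^{1-\gamma}]$, pin down $[n^\gamma] = [m^\gamma]-h$, exclude $\{n^\gamma\}=0$ via the same Taylor-expansion/integrality contradiction (the paper's equation (\ref{ngamma0})), then use $n\in\N^c$ and (\ref{n1chc}) to identify $n = [(m^\gamma-h)^c]+1$ and deduce $\{n^\gamma\} > \{m^\gamma\}$, and finally expand $n^{1-\gamma}=(m-h[cm^{1-\gamma}])^{1-\gamma}$ for the last claim. This is essentially identical to the paper's own proof, and the reasoning is sound.
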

Conversely, assume that $0<h\leq {k_0}$ and $n=\fs_{-h}(m)$ with $\chi(m^\gamma),\psi(cm^{1-\gamma})>0$. 
Then
$$
n^{\gamma}\leq [m^\gamma]-h+1<(n+1)^\gamma.
$$
It is also impossible that $\{n^\gamma\}=0$, since if so, then
$$
n=([m^\gamma]-h+1)^c=m-h\cdot[cm^{1-\gamma}]+c(1-\{m^\gamma\})m^{1-\gamma}-h\cdot\{cm^{1-\gamma}\}+O(m^{1-2\gamma}),
$$
which will lead to a similar contradiction as (\ref{ngamma0}).
So we also have
$$
[m^\gamma]=[n^\gamma]+h,
$$
and
\begin{equation}\label{mngh}
m=[([m^\gamma]+1)^c]=[([n^\gamma]+h+1)^c]=\fs_{h}(n).
\end{equation}
According to Lemma \ref{shnchi} and (\ref{mngh}), for each $0\leq h\leq{k_0}$, we get
\begin{align}
&\sum_{\substack{n\sim X,\ n\in\N^c\\ \fs_{j}(n)\equiv 1\pmod{W}\\\text{for any }0\leq j\leq {k_0}}}
\varpi(\fs_{h}(n))\chi(\fs_h(n)^{\gamma})\psi(c\fs_h(n)^{1-\gamma})\bigg(\sum_{\substack{d_i\mid \fs_{i}(n)\\ 0\leq i\leq {k_0}}}\lambda_{d_0,\ldots,d_{k_0}}\bigg)^2\notag\\
=&\sum_{\substack{m\sim X\\ \fs_{j-h}(m)\equiv 1\pmod{W}\\ \text{for any }0\leq j\leq {k_0}}}\varpi(m)\chi(m^{\gamma})\psi(cm^{1-\gamma})\bigg(\sum_{\substack{d_i\mid \fs_{i-h}(m)\\ 0\leq i\leq {k_0}}}\lambda_{d_0,\ldots,d_{k_0}}\bigg)^2+O(\log X).
\end{align}

Below we just consider the case $h=0$, since all other cases are similar.
Clearly
\begin{align}\label{sumpichilambda}
&\sum_{\substack{n\sim X\\ \fs_{j}(n)\equiv 1\pmod{W}\\ \text{for any }0\leq j\leq {k_0}}}\varpi(n)\chi(n^{\gamma})\psi(cn^{1-\gamma})\bigg(\sum_{\substack{d_i\mid \fs_{i}(n)\\ 0\leq i\leq {k_0}}}\lambda_{d_0,\ldots,d_{k_0}}\bigg)^2\\
=&\sum_{\substack{d_1,\ldots,d_{k_0},e_1,\ldots,e_{k_0}
}}\lambda_{1,d_1,\ldots,d_{k_0}}\lambda_{1,e_1,\ldots,e_{k_0}}
\sum_{\substack{n\sim X\\ n,\fs_{1}(n),\ldots,\fs_{{k_0}}(n)\equiv 1\pmod{W}\\
[d_i,e_i]\mid \fs_{i}(n)\text{ for }
1\leq i\leq {k_0}}}\varpi(n)\chi(n^{\gamma})\psi(cn^{1-\gamma}).
\end{align}
Fix $d_1,\ldots,d_{k_0},e_1,\ldots,e_{k_0}$ with $d_1\cdots d_{k_0},e_1\cdots e_{k_0}\leq R$. We need to consider
\begin{align}\label{nfsnsum}
\sum_{\substack{n\sim X\\ n,\fs_{1}(n),\ldots,\fs_{{k_0}}(n)\equiv 1\pmod{W}\\
[d_i,e_i]\mid \fs_{i}(n)\text{ for }
1\leq i\leq {k_0}}}\varpi(n)\chi(n^{\gamma})\psi(cn^{1-\gamma}).
\end{align}

First, we claim that the sum (\ref{nfsnsum}) is $0$ unless those $[d_i,e_i]$ are pairwise coprime. In fact, assume that $[d_{i_1},e_{i_1}]$ and $[d_{i_2},e_{i_2}]$ have a common prime divisor $p$. Clearly $p\nmid W$, i.e., $p>{k_0}$. Recall that for each $1\leq i\leq {k_0}$,
$$
\fs_{i}(n)=n+i\cdot[cn^{1-\gamma}]
$$
provided $\chi(n^{\gamma}),\psi(cn^{1-\gamma})>0$. Thus we must have $p\mid [cn^{1-\gamma}]$ and $p\mid n$. It is impossible since $n$ is prime now.
Moreover, clearly $[d_i,e_i]$ is coprime to $W$ for each $1\leq i\leq{k_0}$.

Below we assume that $W,[d_1,e_1],\ldots,[d_{k_0},e_{{k_0}}]$ are pairwise coprime.
Clearly
\begin{align}\label{sumpichi}
&\sum_{\substack{n\sim X\\ n,\fs_{i}(n)\equiv 1\pmod{W}\\
[d_i,e_i]\mid \fs_{i}(n)\text{ for }
1\leq i\leq {k_0}}}\varpi(n)\chi(n^{\gamma})\psi(cn^{1-\gamma})
=\sum_{\substack{n\sim X\\ n\equiv 1\pmod{W}\\
[cn^{1-\gamma}]\equiv0\pmod{W}\\
n+i\cdot [cn^{1-\gamma}]\equiv0\pmod {[d_i,e_i]}\\
\text{for }1\leq i\leq {k_0}}}\varpi(n)\chi(n^{\gamma})\psi(cn^{1-\gamma})\notag\\
=&\sum_{\substack{n\sim X\\
n\equiv 1\pmod{W}}}\varpi(n)\chi(n^{\gamma})\psi(cn^{1-\gamma})\bigg(\frac1{W}\sum_{r_0=0}^{W-1}e
\bigg(\frac{r_0[cn^{1-\gamma}]}{W}\bigg)\bigg)\notag\\
&\cdot\prod_{\substack{1\leq i\leq {k_0}}}\bigg(\frac1{[d_i,e_i]}\sum_{r_i=0}^{[d_i,e_i]-1}e\bigg(\frac{r_i(n+i[cn^{1-\gamma}])}{[d_i,e_i]}\bigg)
\bigg)\notag\\
=&\frac{1}{W[d_1,e_1]\cdots[d_{k_0},e_{k_0}]}\sum_{\substack{0\leq r_0\leq W-1\\ 0\leq r_i\leq [d_i,e_i]-1}}\sum_{\substack{n\sim X\\
n\equiv 1\pmod{W}}}\varpi(n)\chi(n^{\gamma})\psi(cn^{1-\gamma})\notag\\
&\cdot e\bigg(n\sum_{i=1}^{k_0}\frac{r_i}{[d_i,e_i]}+[cn^{1-\gamma}]
\bigg(\frac{r_0}{W}+\sum_{i=1}^{k_0}\frac{r_ii}{[d_i,e_i]}\bigg)\bigg).
\end{align}
Fix $r_0,r_1,\ldots,r_{{k_0}}$ and let
$$
\theta_1=\sum_{i=1}^{k_0}\frac{r_i}{[d_i,e_i]},\qquad
\theta_2=\frac{r_0}{W}+\sum_{i=1}^{k_0}\frac{r_ii}{[d_i,e_i]}.
$$
\begin{lemma}\label{fit} For any $H\geq 2$,
\begin{align}\label{fint}
e(-\theta\{x\})=
\fc(\theta)\sum_{|h|\leq H}\frac{e(hx)}{h+\theta}+O(\Phi(x;H)\log H),
\end{align}
where $|\fc(\theta)|\leq\|\theta\|$ and $\Phi(x;H)=(1+H\|x\|)^{-1}$.
\end{lemma}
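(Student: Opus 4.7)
The plan is to derive this identity as the truncated Fourier expansion of the $1$-periodic function $g(x) := e(-\theta\{x\})$.

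First, I would compute the Fourier coefficients of $g$ by direct integration. Using $\{y\} = y$ on $[0,1)$,
\[
a_h = \int_0^1 e(-\theta y - hy)\,dy = \frac{1-e(-\theta)}{2\pi i\,(h+\theta)}
\]
whenever $h+\theta \neq 0$. Setting $\fc(\theta) := (1-e(-\theta))/(2\pi i)$ gives $a_h = \fc(\theta)/(h+\theta)$; the bound $|\fc(\theta)| \leq \|\theta\|$ then follows from the elementary estimate $|1-e(\alpha)| = 2|\sin(\pi\alpha)| \leq 2\pi\|\alpha\|$.

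Next, I would compare $g(x)$ with the symmetric partial Fourier sum $\fc(\theta)\sum_{|h|\leq H} e(hx)/(h+\theta)$ by estimating the tail $\sum_{|h|>H} e(hx)/(h+\theta)$ via Abel summation. The key input is the classical geometric bound $\bigl|\sum_{H<h\leq M} e(hx)\bigr| \leq \min(M-H,\ \|x\|^{-1})$, and the fact that the coefficient differences $1/(h+\theta) - 1/(h+1+\theta)$ are of order $1/h^2$. These ingredients reduce the tail to a weighted sum of $\min(m-H,\|x\|^{-1})/m^2$, which one then controls by splitting the range at the threshold $m = H+\|x\|^{-1}$: the inner range contributes an amount of size $\log(1+1/(H\|x\|))$, while the outer range contributes $O(1/(H\|x\|+1))$.

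The main technical point is that $g$ has jump discontinuities at integers, where $\|x\| = 0$ and $\Phi(x;H) = 1$, so the Fourier series converges only in the principal-value sense. Pairing the $h>0$ and $h<0$ tails symmetrically is exactly what produces the $\Phi(x;H)$ decay together with at most a $\log H$ factor, reflecting the familiar logarithmic defect of the Dirichlet kernel at the jump. Once the symmetric truncation is taken, combining both range contributions in the previous step with the crude bound $\log(1+t) \leq (1+t)\log 2 \cdot \log H / \log H$ for moderate $t$ and $\log(1+t) \ll 1$ for small $t$ yields the claimed $O(\Phi(x;H)\log H)$ error. This is a standard Fourier-truncation identity for the complex sawtooth $e(-\theta\{x\})$, and the argument is routine once the coefficients have been identified.
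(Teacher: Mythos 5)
Your Fourier-coefficient computation is correct: with $g(x)=e(-\theta\{x\})$, the coefficients are $a_h=\dfrac{1-e(-\theta)}{2\pi i(h+\theta)}$, so $\fc(\theta)=\dfrac{1-e(-\theta)}{2\pi i}$ and $|\fc(\theta)|=\dfrac{|\sin\pi\theta|}{\pi}\le\|\theta\|$. The paper itself offers no proof (it says only ``easy exercise, left to the reader''), so there is no argument to compare against; the question is whether your tail estimate closes.

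It does not, as written. After Abel summation you bound the tail by $\sum_{m>H}\min(m-H,\|x\|^{-1})/m^2$ \emph{with absolute values}, and you correctly identify the inner-range contribution as $\log\bigl(1+\tfrac{1}{H\|x\|}\bigr)$. But this is \emph{not} $O(\Phi(x;H)\log H)$ when $\|x\|$ is very small: if, say, $\|x\|=H^{-100}$, then $\Phi(x;H)\asymp1$ so the allowed error is $\asymp\log H$, whereas $\log\bigl(1+\tfrac1{H\|x\|}\bigr)\asymp\log\tfrac1{\|x\|}\asymp 100\log H$ is fine, but if $\|x\|=e^{-H}$ the bound becomes $\asymp H\gg\log H$. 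Your closing manipulation ``$\log(1+t)\le(1+t)\log 2\cdot\log H/\log H$'' just reproduces $\log(1+t)\le(1+t)\log 2$, and $(1+t)=1+\tfrac1{H\|x\|}$ is the \emph{reciprocal} of the small quantity you would need, so it does not deliver $\Phi\log H$. You gesture at pairing the $h>0$ and $h<0$ tails, which is the right idea, but that cancellation has already been destroyed once you pass to $\min(m-H,\|x\|^{-1})/m^2$: absolute values were taken before the pairing.

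The cleanest repair is to split the analysis into two regimes rather than splitting the sum at $m=H+\|x\|^{-1}$. When $H\|x\|\ge1$, the one-sided Abel summation you set up already gives the tail $\ll\|\theta\|/(H\|x\|)\ll\Phi\le\Phi\log H$. When $H\|x\|<1$, so that $\Phi\ge\tfrac12$, simply note that $S_H(x)=(g*D_H)(x)$ with $\|D_H\|_1\ll\log H$, hence $|g(x)-S_H(x)|\le\|g\|_\infty(1+\|D_H\|_1)\ll\log H\ll\Phi\log H$, and no tail estimate is needed at all. (Alternatively, pair $h$ with $-h$ \emph{before} taking absolute values: $\tfrac{e(hx)}{h+\theta}+\tfrac{e(-hx)}{-h+\theta}=\tfrac{2ih\sin(2\pi hx)-2\theta\cos(2\pi hx)}{h^2-\theta^2}$, and then the $|\sin(2\pi hx)|\le2\pi h\|x\|$ bound on $H<h\le\|x\|^{-1}$ together with Abel summation beyond $\|x\|^{-1}$ gives $O(1)$ in this regime, even without the $\log H$.) Either of these closes the gap; as it stands your argument does not prove the stated bound for $\|x\|$ near $0$.
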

\begin{proof} (\ref{fint}) is an easy exercise for the Fourier series. We leave its proof to the reader.
\end{proof}
Let $H=X^{2\sigma_0}$. Note that $\Phi(cn^{1-\gamma};H)\ll H^{-1}$ if $\psi(cn^{1-\gamma})>0$. Then
\begin{align}\label{sumH}
&\sum_{\substack{n\sim X\\
n\equiv 1\pmod{W}}}\varpi(n)\chi(n^{\gamma})\psi(cn^{1-\gamma})e(n\theta_1+[cn^{1-\gamma}]\theta_2)\notag\\
=&\sum_{\substack{n\sim X\\
n\equiv 1\pmod{W}}}\varpi(n)\chi(n^{\gamma})\psi(cn^{1-\gamma})\cdot\bigg(\fc(\theta_2)\sum_{|h|\leq H}\frac{e\big(n\theta_1+(h+\theta_2)\cdot cn^{1-\gamma}\big)}{h+\theta_2}+O(H^{-1}\log H)\bigg).
\end{align}

Now using Lemma \ref{smooth} and letting $\alpha_0=3\delta_0 X^{\gamma-1}/4$, we get
\begin{align*}
\chi(n^{\gamma})
=&\alpha_0+\sum_{1\leq|j|\leq X^{1-\gamma+\epsilon_0}}\alpha_je(jn^{\gamma})+\sum_{|j|>X^{1-\gamma+\epsilon_0}}\alpha_je(jn^{\gamma})\\
=&\alpha_0+\sum_{1\leq|j|\leq X^{1-\gamma+\epsilon_0}}\alpha_je(jn^{\gamma})+O(X^{-2}),
\end{align*}
by noting that $$
\alpha_j\ll \bigg(\frac{1}{X^{\gamma-1}\cdot j}\bigg)^{10\epsilon_0^{-1}+2}\ll
\frac{X^{2-2\gamma}}{j^2}\cdot\bigg(\frac{1}{X^{\gamma-1}\cdot X^{1-\gamma+\epsilon_0}}\bigg)^{10\epsilon_0^{-1}}\ll
\frac{1}{X^2j^2}
$$
for those $j>X^{1-\gamma+\epsilon_0}$.
And we also have
\begin{align*}
\psi(cn^{1-\gamma})=\beta_0+\sum_{1\leq|j|\leq X^{\sigma_0}}\beta_je(jcn^{1-\gamma})+O(X^{-2}),
\end{align*}
where $\beta_0=3\eta_0/4$.
Thus
\begin{align}\label{sumchi12}
&\sum_{\substack{n\sim X\\
n\equiv 1\pmod{W}}}\varpi(n)\chi(n^{\gamma})\psi(cn^{1-\gamma})\cdot e\big(n\theta_1+(h+\theta_2)\cdot cn^{1-\gamma}\big)\notag\\
=&\sum_{\substack{|j_1|\leq X^{1-\gamma+\epsilon_0}\\ |j_2|\leq X^{\sigma_0}}}\alpha_{j_1}\beta_{j_2}\sum_{\substack{n\sim X\\
n\equiv 1\pmod{W}}}\varpi(n)e\big(n\theta_1+j_1n^{\gamma}+(j_2+h+\theta_2)\cdot cn^{1-\gamma}\big)+O(X^{-1}).
\end{align}

Recall that
$$\alpha_{j}\ll\min\{|j|^{-1},X^{\gamma-1}\}$$ in view of Lemma \ref{smooth}.
Applying the Lemma \ref{bf92exp}, for any given $j_2$, $h$ and $s$ with $|j_2|\leq  X^{\sigma_0}$ and $|h|\leq H$, we can get
\begin{align*}
&\sum_{\substack{1\leq |j_1|\leq X^{1-\gamma+\epsilon_0}}}|\alpha_{j_1}|\cdot\bigg|\sum_{\substack{n\sim X}}\varpi(n)e\big(n(\theta_1+sW^{-1})+j_1n^{\gamma}+(j_2+h+\theta_2)\cdot cn^{1-\gamma}\big)\bigg|\\
\ll& X^{\gamma-1}\log X\cdot\max_{1\leq Y\leq X^{1-\gamma+\epsilon_0}}\min\bigg\{1,\frac{X^{1-\gamma}}{Y}\bigg\}\\
&\cdot\sum_{\substack{j_1\sim Y}}\bigg|\sum_{\substack{n\sim X}}\varpi(n)e\big(n(\theta_1+sW^{-1})+j_1n^{\gamma}+(j_2+h+\theta_2)\cdot cn^{1-\gamma}\big)\bigg|\\
\ll&X^{\gamma-6\sigma_0}.
\end{align*}
That is,
\begin{align}\label{alphajbeta}
&\sum_{\substack{1\leq|j_1|\leq X^{1-\gamma+\epsilon_0}\\ |j_2|\leq X^{\sigma_0}}}\alpha_{j_1}\beta_{j_2}\sum_{\substack{n\sim X\\
n\equiv 1\pmod{W}}}\varpi(n)e\big(n\theta_1+j_1n^{\gamma}+(j_2+h+\theta_2)\cdot cn^{1-\gamma}\big)\notag\\
=&\frac{1}{W}\sum_{\substack{1\leq|j_1|\leq X^{1-\gamma+\epsilon_0}\\ |j_2|\leq X^{\sigma_0}}}\alpha_{j_1}\beta_{j_2}\sum_{n\sim X}\sum_{0\leq s\leq W-1}e(s(n-1)W^{-1})
\varpi(n)e\big(n\theta_1+j_1n^{\gamma}+(j_2+h+\theta_2)\cdot cn^{1-\gamma}\big)\notag\\
\ll&\frac{X^{\sigma_0}}{W}\sum_{\substack{1\leq|j_1|\leq X^{1-\gamma+\epsilon_0}\\ 0\leq s\leq W-1}}|\alpha_{j_1}|\cdot\bigg|\sum_{\substack{n\sim X}}\varpi(n)e\big(n(\theta_1+sW^{-1})+j_1n^{\gamma}+(j_2+h+\theta_2)\cdot cn^{1-\gamma}\big)\bigg|\notag\\
\ll&X^{\gamma-5\sigma_0}.
\end{align}

Below we need to show that
\begin{equation}\label{alpha0beta}
\alpha_0\sum_{\substack{|j_2|\leq X^{\sigma_0}}}\beta_{j_2}\sum_{\substack{n\sim X\\
n\equiv 1\pmod{W}}}\varpi(n)e\big(n\theta_1+(j_2+h+\theta_2)\cdot cn^{1-\gamma}\big)\ll \alpha_0 X^{1-5\sigma_0}
\end{equation}
unless $\theta_1=\theta_2=0$.
Assume that $r_0,r_1,\ldots,r_{k_0}$ are not all zero.
Since $W,[d_1,e_1],\cdots,[d_{k_0},e_{k_0}]$ are pairwise coprime and $d_1\cdots d_{k_0},e_1\cdots e_{k_0}\leq R=X^{\sigma_0}$, we must have
\begin{equation}\label{theta2lower}
\|\theta_2\|=\bigg\|\frac{r_0}{W}+\sum_{i=1}^{k_0}\frac{r_ii}{[d_i,e_i]}\bigg\|\geq \frac{1}{W\prod_{i=1}^{k_0}[d_i,e_i]}\geq\frac{1}{WX^{2\sigma_0}}.
\end{equation}

Suppose that $0\leq s\leq W-1$ and let
$$
f(x)=(\theta_1+sW^{-1})x+(j_2+h+\theta_2)\cdot cx^{1-\gamma}.
$$
In view of the Heath-Brown identity \cite{He83}, for any $\epsilon>0$,
\begin{equation}\label{heathbrown}
\sum_{\substack{n\sim X}}\varpi(n)e\big(f(n)\big)\ll_\epsilon X^\epsilon\max_{M}\bigg|\sum_{\substack{mn\sim X\\ m\sim M}}a_m b_ne\big(f(mn)\big)\bigg|,
\end{equation}
where $a_m,b_n$ satisfies one of the following three conditions:
\begin{equation}\label{sumtypeii}
|a_m|\leq1,\qquad |b_n|\leq1,
\end{equation}
\begin{equation}\label{sumtypeia}
|a_m|\leq1,\qquad b_n=1,
\end{equation}
\begin{equation}\label{sumtypeib}
|a_m|\leq1,\qquad b_n=\log n.
\end{equation}
Here the cases (\ref{sumtypeia}) and (\ref{sumtypeib}) are the Type I sums and
the case (\ref{sumtypeii}) is the Type II sum.
Furthermore, according to Proposition 1 of \cite{BF92}, we only need to consider the Type I sums for $M\leq X^{7/12+\sigma_0}$ and the Type II sums for $X^{2/3-3\sigma_0}\leq M\leq X^{5/6-\sigma_0}$.
We first consider the Type II sum. Clearly,
\begin{align*}
&\bigg|\sum_{\substack{mn\sim X\\ m\sim M}}a_m b_ne\big(f(mn)\big)\bigg|^2
\ll M\sum_{m\sim M}\bigg|\sum_{\substack{n\sim X/m}}b_ne\big(f(mn)\big)\bigg|^2\\
\ll&M\sum_{\substack{n_1,n_2\sim X/M}}\bigg|\sum_{m\sim M,X/n_1,X/n_2}e\big(f(mn_1)-f(mn_2)\big)\bigg|\\
\ll&XM+M\sum_{\substack{1\leq l\leq X/M\\ n,n+l\sim X/M}}\bigg|\sum_{m\sim M,X/(n+l),X/n}e\big(f(m(n+l))-f(mn)\big)\bigg|.\\
\end{align*}
\begin{lemma} Suppose that $\Delta>0$ and
$$
|f''(x)|\asymp\Delta
$$
for any $x\in[X,X+Y]$, where $f\asymp g$ means $f\ll g\ll f$.
Then
$$
\sum_{X\leq n\leq X+Y}e\big(f(n)\big)\ll Y\Delta^{1/2}+\Delta^{-1/2}.
$$
\end{lemma}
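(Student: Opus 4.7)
The plan is to prove this as the classical van der Corput second derivative test, deducing it from the first derivative (Kusmin--Landau) test by partitioning the interval according to the distance of $f'$ from the integers. By replacing $f$ with $-f$ if necessary, I may assume that $f''(x)>0$ on $[X,X+Y]$, so that $f'$ is strictly increasing there.

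First I record the total variation of $f'$: by the mean value theorem, $f'(X+Y)-f'(X)\asymp Y\Delta$. Consequently the image $f'([X,X+Y])$ meets at most $O(Y\Delta+1)$ integers, and for each such integer $m$ there is a unique $x_m\in[X,X+Y]$ with $f'(x_m)=m$. I then partition $[X,X+Y]$ into maximal sub-intervals on which either $\|f'(x)\|<\Delta^{1/2}$ (the ``bad'' intervals) or $\|f'(x)\|\geq\Delta^{1/2}$ (the ``good'' intervals). Since $|f''|\asymp\Delta$, every bad interval is contained in the $\Delta^{-1/2}$-neighborhood of some $x_m$, so it has length $\ll\Delta^{-1/2}$, and there are $\ll Y\Delta+1$ bad intervals in all.

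On each bad interval I bound the exponential sum trivially by its length, giving a total contribution of $\ll(Y\Delta+1)\Delta^{-1/2}\ll Y\Delta^{1/2}+\Delta^{-1/2}$. On each good interval I invoke the Kusmin--Landau first derivative test: because $f'$ is monotone and $\|f'\|\geq\Delta^{1/2}$ throughout, the sum over such an interval is $\ll\Delta^{-1/2}$. Since good and bad intervals alternate along $[X,X+Y]$, there are also at most $O(Y\Delta+1)$ good intervals, yielding a matching bound. Summing the two contributions gives
$$
\sum_{X\leq n\leq X+Y}e(f(n))\ll Y\Delta^{1/2}+\Delta^{-1/2},
$$
as claimed.

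The only non-trivial ingredient is the Kusmin--Landau lemma itself, a standard consequence of partial summation once one writes $e(f(n))=(e(f(n))-e(f(n-1)))/(e(f(n)-f(n-1))-1)\cdot(\text{telescoping})$ and uses monotonicity of $f'$; I do not expect any obstacle here. The slightly delicate point to get right is the counting of bad intervals---one must use that each bad interval is forced to contain (or lie very close to) some $x_m$ by the one-sided bound $f''\asymp\Delta$, so that their number is governed by the variation $Y\Delta$ rather than by the length $Y$.
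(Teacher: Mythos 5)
Your argument is the standard textbook derivation of van der Corput's second derivative test from the Kusmin--Landau first derivative test, and it is correct. The paper does not reproduce a proof at all; it simply cites Graham--Kolesnik, Theorem 2.2 (which is proved there by exactly the good/bad interval partition you describe), so your proposal matches the reference the authors had in mind.

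One tiny remark for completeness: the Kusmin--Landau step only has content when the threshold $\Delta^{1/2}$ is below $1/2$; when $\Delta\gtrsim 1$ the whole interval is ``bad'' and one should fall back on the trivial bound, which is absorbed by the term $Y\Delta^{1/2}$ provided $Y\gg 1$ (the regime in which the lemma is ever used). This is standard housekeeping and does not affect the validity of your argument.
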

\begin{proof} This is the well-known van der Corput inequality (cf. \cite[Theorem 2.2]{GK91} ).
\end{proof}
Let $\theta_3=j_2+h+\theta_2$.
Note that
\begin{align*}
|f''((n+l)x)-f''(nx)|=&|\theta_3(n+l)^{1-\gamma}\cdot (\gamma-1)x^{-1-\gamma}-
\theta_3 n^{1-\gamma}\cdot (\gamma-1)x^{-1-\gamma}|\\
\asymp&\theta_3ln^{-\gamma}M^{-1-\gamma}\asymp \theta_3lX^{-\gamma}M^{-1}
\end{align*}
for any $x\sim M$ and $1\leq l\leq X/M$.
So
\begin{align*}
&\sum_{m\sim M,X/(n+l),X/n}e\big(f(m(n+l))-f(mn)\big)\\
\ll&M\cdot\theta_3^{1/2}l^{1/2}X^{-\gamma/2}M^{-1/2}+
\theta_3^{-1/2}l^{-1/2}X^{\gamma/2}M^{1/2}.
\end{align*}
Thus
\begin{align*}
&\bigg|\sum_{\substack{mn\sim X\\ m\sim M}}a_m b_ne\big(f(mn)\big)\bigg|^2\\
\ll&XM+M\cdot\frac{X}{M}\sum_{1\leq l\leq X/M}(\theta_3^{1/2}l^{1/2}X^{-\gamma/2}M^{1/2}+
\theta_3^{-1/2}l^{-1/2}X^{\gamma/2}M^{1/2})\\
\\
\ll&XM+
\theta_3^{1/2}X^{5/2-\gamma/2}M^{-1}+
\theta_3^{-1/2}X^{3/2+\gamma/2}\\
\ll&
XM+\theta_3^{1/2}X^{11/6+3\sigma_0-\gamma/2}+\theta_3^{-1/2}X^{3/2+\gamma/2}.
\end{align*}
Recalling $\theta_3=j_2+h+\theta_2$ and using (\ref{theta2lower}),
we get
$$
\frac{1}{WX^{2\sigma_0}}\leq|\theta_3|\leq 2|X^{2\sigma_0}|.
$$
It follows that
\begin{equation}
\bigg|\sum_{\substack{mn\sim X\\ m\sim M}}a_m b_ne\big(f(mn)\big)\bigg|^2\ll X^{11/6-\sigma_0}+X^{3/2+\gamma/2+2\sigma_0}\ll X^{2-14\sigma_0}.
\end{equation}

For the case (\ref{sumtypeib}), we also have
\begin{align*}
\bigg|\sum_{\substack{mn\sim X\\ m\sim M}}a_me\big(f(mn)\big)\log n\bigg|\leq&\sum_{\substack{m\sim M}}\bigg|\sum_{n\sim X/m}e\big(f(mn)\big)\int_1^n\frac{d t}t\bigg|\\
\leq&\int_1^{2X/M}\sum_{\substack{m\sim M}}\bigg|\sum_{\substack{n\sim X/m\\ n\geq t}}e\big(f(mn)\big)\bigg|\frac{d t}t.
\end{align*}
It suffices to consider those $t$ with $t\sim X/M$. Clearly
\begin{align*}
\sum_{\substack{m\sim M}}\bigg|\sum_{\substack{n\sim X/m\\ n\geq t}}e\big(f(mn)\big)\bigg|
=\sum_{\substack{m\sim M\\ m\leq 2X/t}}\bigg|\sum_{\substack{t\leq n\leq 2X/m}}e\big(f(mn)\big)\bigg|.
\end{align*}
Since
$$
|f''(mx)|\sim|\theta_3m^{1-\gamma}\cdot (\gamma-1)(X/M)^{-1-\gamma}|
$$
for $t\leq x\leq 2X/M$,
we have
$$
\sum_{\substack{t\leq n\leq 2X/M}}e\big(f(mn)\big)\ll \frac{X}{M}\cdot
\theta_3^{1/2}MX^{-1/2-\gamma/2}+\theta_3^{-1/2}M^{-1}X^{1/2+\gamma/2}.
$$
So
\begin{equation}
\sum_{\substack{mn\sim X\\ m\sim M}}a_me\big(f(mn)\big)\log n\ll
\theta_3^{1/2}MX^{1/2-\gamma/2}+\theta_3^{-1/2}X^{1/2+\gamma/2}\ll X^{1-7\sigma_0}.
\end{equation}
Similarly, it is not difficult to show that
\begin{equation}
\sum_{\substack{mn\sim X\\ m\sim M}}a_me\big(f(mn)\big)
\ll X^{1-7\sigma_0}.
\end{equation}
Thus noting that $|j_2|\leq X^{\sigma_0}$ and using (\ref{heathbrown}), we get (\ref{alpha0beta}).

Now in view of (\ref{sumH}), (\ref{sumchi12}), (\ref{alphajbeta}) and (\ref{alpha0beta}), we have
\begin{align*}
&\sum_{\substack{n\sim X\\
n\equiv 1\pmod{W}}}\varpi(n)\chi(n^{\gamma})\psi(cn^{1-\gamma})e(n\theta_1+[cn^{1-\gamma}]\theta_2)\notag\\
\ll&\alpha_0 X^{1-5\sigma_0}\log X+\frac1H\sum_{\substack{n\sim X\\
n\equiv 1\pmod{W}}}\varpi(n)\chi(n^{\gamma})\psi(cn^{1-\gamma}),
\end{align*}
provided that $\theta_1\neq0$ or $\theta_2\neq0$.
Using the similar discussions, we can also get
\begin{align*}
&\sum_{\substack{n\sim X\\
n\equiv 1\pmod{W}}}\varpi(n)\chi(n^{\gamma})\psi(cn^{1-\gamma})\\
=&\sum_{\substack{|j_1|\leq X^{1-\gamma+\epsilon_0}\\ |j_2|\leq X^{\sigma_0}}}\alpha_{j_1}\beta_{j_2}
\sum_{\substack{n\sim X\\
n\equiv 1\pmod{W}}}\varpi(n)e(j_1n^\gamma+j_2\cdot cn^{1-\gamma})+O(X^{-1})\\
=&\alpha_0\beta_0\sum_{\substack{n\sim X\\
n\equiv 1\pmod{W}}}\varpi(n)+O(X^{\gamma-4\sigma_0}).
\end{align*}
It follows that
\begin{align*}
&\sum_{\substack{n\sim X\\
n\equiv 1\pmod{W}}}\varpi(n)\chi(n^{\gamma})\psi(cn^{1-\gamma})e(n\theta_1+[cn^{1-\gamma}]\theta_2)\\
=&\begin{cases}
\alpha_0\beta_0\sum_{\substack{n\sim X\\
W\mid n-1}}\varpi(n)+O(X^{\gamma-4\sigma_0}),&\text{if }\theta_1=\theta_2=0,\\
O(X^{\gamma-4\sigma_0}),&\text{otherwise}.
\end{cases}
\end{align*}
That is, in view of (\ref{sumpichi}),
\begin{align*}
&\sum_{\substack{n\sim X\\ n,\fs_{1}(n),\ldots,\fs_{{k_0}}(n)\equiv 1\pmod{W}\\
[d_i,e_i]\mid \fs_{i}(n)\text{ for }
1\leq i\leq {k_0}}}\varpi(n)\chi(n^{\gamma})\psi(cn^{1-\gamma})\\
=&\frac{1}{W[d_1,e_1]\cdots[d_{k_0},e_{k_0}]}\cdot\alpha_0\beta_0\sum_{\substack{n\sim X\\
n\equiv 1\pmod{W}}}\varpi(n)+O(X^{\gamma-3\sigma_0}).
\end{align*}
It follows from (\ref{sumpichilambda}) that
\begin{align*}
&\sum_{\substack{n\sim X\\ \fs_{j}(n)\equiv 1\pmod{W}\\ \text{for }0\leq j\leq {k_0}}}\varpi(n)\chi(n^{\gamma})\psi(cn^{1-\gamma})\bigg(\sum_{\substack{d_i\mid \fs_{i}(n)\\ 0\leq i\leq {k_0}}}\lambda_{d_0,d_1,\ldots,d_{k_0}}\bigg)^2\\
=&\alpha_0\beta_0\sum_{\substack{d_1,\ldots,d_{k_0},e_1,\ldots,e_{k_0}\\
d_1\cdots d_{k_0},e_1\cdots e_{k_0}\leq R\\
W,[d_1,e_1,],\ldots,[d_{k_0},e_{k_0}]\text{ coprime}}}\frac{\lambda_{1,d_1,\ldots,d_{k_0}}\lambda_{1,e_1,\ldots,e_{k_0}}}{W[d_1,e_1]\cdots[d_{k_0},e_{k_0}]}
\cdot\sum_{\substack{n\sim X\\
n\equiv 1\pmod{W}}}\varpi(n)+O(X^{\gamma-\sigma_0}).
\end{align*}

By the Siegel-Walfisz theorem,
$$
\sum_{\substack{n\sim X\\
n\equiv 1\pmod{W}}}\varpi(n)=\frac{(1+o(1))X}{\phi(W)}.
$$
And by Lemma \ref{maynard}, we have
\begin{align*}
&\sum_{\substack{d_1,\ldots,d_{k_0},e_1,\ldots,e_{k_0}\\
d_1\cdots d_{k_0},e_1\cdots e_{k_0}\leq R\\
W,[d_1,e_1,],\ldots,[d_{k_0},e_{k_0}]\text{ co-prime}}}\frac{\lambda_{1,d_1,\ldots,d_{k_0}}\lambda_{1,e_1,\ldots,e_{k_0}}}{W[d_1,e_1]\cdots[d_{k_0},e_{k_0}]}\\
=&\frac{1+o(1)}{(\log R)^{{k_0}}}\cdot\frac{W^{{k_0}-1}}{\phi(W)^{{k_0}}}\int_{\R^{{k_0}}}\bigg(\frac{\partial^{{k_0}}f(0,t_1,\ldots,t_{k_0})}{\partial t_1\cdots\partial t_{k_0}}\bigg)^{2}d t_1\cdots d t_{k_0}.
\end{align*}
Finally, since $f(t_0,\ldots,t_{k_0})$ is symmetric, Proposition \ref{maynardshapiro} is concluded.\qed

\section{The Proof of Theorem \ref{main}}
\setcounter{lemma}{0}\setcounter{theorem}{0}\setcounter{corollary}{0}
\setcounter{equation}{0}\setcounter{proposition}{0}

Let
$\chi^\circ$ be the smooth function described in Lemma \ref{smooth} with
$$
\alpha=1-4\delta_0 X^{\gamma-1},\ \beta=1,\ \Delta=\frac{\beta-\alpha}{4},\ r=[100\epsilon_0^{-1}],
$$
and let $\chi^*(t)=\chi^\circ(t-\delta_0 X^{\gamma-1})$.
Clearly
$\chi^*(n^{\gamma})=1$ if $\{n^{\gamma}\}\geq 1-2\delta_0 X^{\gamma-1}$. That is, in view of Lemma \ref{shnchi}, we have
\begin{equation}\label{chi12to3}
\chi(\fs_h(n)^{\gamma}),\ \psi(c\cdot\fs_h(n)^{1-\gamma})>0\ \Longrightarrow\
\chi^*(n^{\gamma})=1
\end{equation}
for any $0\leq h\leq k_0$.
Let $\psi^*$ be the smooth function described in Lemma \ref{smooth} with
$$
\alpha=\frac{\eta_0}{2},\quad \beta=\frac{5\eta_0}{2},\quad \Delta=\frac{\beta-\alpha}{4},\quad r=[100\sigma_0^{-1}].
$$
By Lemma \ref{shnchi}, we also have
\begin{equation}\label{chi2to4}
\psi(c\cdot\fs_h(n)^{1-\gamma})>0\ \Longrightarrow\
\psi^*(c\cdot n^{1-\gamma})=1.
\end{equation}
for any $0\leq h\leq k_0$.

Furthermore,
Assume that $\chi^*(n^{\gamma}),\psi^*(cn^{1-\gamma})>0$. Then
$$
\{n^{\gamma}\}\in[1-3\delta_0X^{\gamma-1},1)\cup[0,\delta_0X^{\gamma-1}],
$$
if $n\in \mathbb{N}^c$, we have
$$
\{n^{\gamma}\}\in[1-3\delta_0X^{\gamma-1},1),
$$
by the discussion of Lemma \ref{nchi}, we know that for each $0\leq h\leq{k_0}$,
$$
\fs_h(n)=n+h\cdot[cn^{1-\gamma}],
$$
when $\chi^*(n^{\gamma}),\psi^*(cn^{1-\gamma})>0$ and $n\in \mathbb{N}^c$.
Thus
\begin{align}\label{sumchi34nc}
&\sum_{\substack{n\sim X,\ n\in\N^c\\ \fs_j(n)\equiv1\pmod{W}\\
\text{for }0\leq j\leq{k_0}}}\chi^*(n^{\gamma})
\psi^*(cn^{1-\gamma})\bigg(\sum_{\substack{d_i\mid \fs_i(n)\\ 0\leq i\leq{k_0}}}\lambda_{d_0,\ldots,d_{k_0}}\bigg)^2\notag\\
=&\sum_{\substack{n\sim X,\ n\in\N^c\\ n+j\cdot[cn^{1-\gamma}]\equiv1\pmod{W}\\
\text{for }0\leq j\leq{k_0}}}\chi^*(n^{\gamma})
\psi^*(cn^{1-\gamma})\bigg(\sum_{\substack{d_i\mid n+i\cdot[cn^{1-\gamma}]\\ 0\leq i\leq{k_0}}}\lambda_{d_0,\ldots,d_{k_0}}\bigg)^2\notag\\
\leq&\sum_{\substack{n\sim X\\ n+j\cdot[cn^{1-\gamma}]\equiv1\pmod{W}\\
\text{for }0\leq j\leq{k_0}}}\chi^*(n^{\gamma})
\psi^*(cn^{1-\gamma})\bigg(\sum_{\substack{d_i\mid n+i\cdot[cn^{1-\gamma}]\\ 0\leq i\leq{k_0}}}\lambda_{d_0,\ldots,d_{k_0}}\bigg)^2.
\end{align}

Let $\fX_q$ be the set of those $(d_0,\ldots,d_{k_0},e_0,\ldots,e_{k_0})$ with $d_0\cdots d_{k_0},e_0\cdots e_{k_0}\leq R$ satisfying

\medskip
\noindent(i) $d_0,\ldots,d_{k_0},e_0,\ldots,e_{k_0}$ are square-free;

\medskip
\noindent(ii) $d_0,\ldots,d_{k_0},e_0,\ldots,e_{k_0}$ are coprime to $W$;

\medskip
\noindent(iii) $q$ is the product of those primes $p$ such that $p$ divides $([d_i,e_i],[d_j,e_j])$ for some $0\leq i<j\leq k_0$;\medskip

\noindent 
Clearly for any square-free $d_0,\ldots,d_{k_0},e_0,\ldots,e_{k_0}$ with $d_0\cdots d_{k_0},e_0\cdots e_{k_0}\leq R$ and $(d_je_j,W)=1$, there exists unique $q\leq R^2$ such that $(d_0,\ldots,d_{k_0},e_0,\ldots,e_{k_0})\in\fX_q$.
Note that if $p>k_0$ divides both $n+i\cdot[cn^{1-\gamma}]$ and $n+j\cdot[cn^{1-\gamma}]$ for $0\leq i<j\leq k_0$, then $p$ divides both $n$ and $[cn^{1-\gamma}]$. So we have
\begin{align}\label{sumchi34}
&\sum_{\substack{n\sim X\\ n+j\cdot[cn^{1-\gamma}]\equiv1\pmod{W}\\
\text{for }0\leq j\leq{k_0}}}\chi^*(n^{\gamma})
\psi^*(cn^{1-\gamma})\bigg(\sum_{\substack{d_i\mid n+i\cdot[cn^{1-\gamma}]\\ 0\leq i\leq{k_0}}}\lambda_{d_0,\ldots,d_{k_0}}\bigg)^2\notag\\
=&\sum_{q\leq R^2}\sum_{(d_0,\ldots,d_{k_0},e_0,\ldots,e_{k_0})\in\fX_q}\lambda_{d_0,\ldots,d_{k_0}}\lambda_{e_0,\ldots,e_{k_0}}\sum_{\substack{n\sim X\\
n+j\cdot[cn^{1-\gamma}]\equiv1\pmod{W}\\
n+j\cdot[cn^{1-\gamma}]\equiv0\pmod{[d_j,e_j]}\\
\text{for each }0\leq j\leq{k_0}
}}\chi^*(n^{\gamma})
\psi^*(cn^{1-\gamma})\notag\\
=&\sum_{q\leq R^2}\sum_{(d_0,\ldots,d_{k_0},e_0,\ldots,e_{k_0})\in\fX_q}\lambda_{d_0,\ldots,d_{k_0}}\lambda_{e_0,\ldots,e_{k_0}}\sum_{\substack{n\sim X\\
n,[cn^{1-\gamma}]\equiv0\pmod{q}\\ n-1,[cn^{1-\gamma}]\equiv0\pmod{W}\\
n+j\cdot[cn^{1-\gamma}]\equiv0\pmod{\fq_j^{-1}[d_j,e_j]}\\
\text{for each }0\leq j\leq{k_0}
}}\chi^*(n^{\gamma})
\psi^*(cn^{1-\gamma}),
\end{align}
where $\fq_j=(q,[d_j,e_j])$. Clearly $W,q,\fq_0^{-1}[d_0,e_0],\ldots,\fq_{k_0}^{-1}[d_{k_0},e_{k_0}]$ are co-prime to each other, according to the definition of $\fX_q$.
Since $d_0\cdots d_{k_0},e_0\cdots e_{k_0}\leq R$, using the similar discussions in Section \ref{sec2}, we can prove without any difficulty that
\begin{equation}\label{nfschi34}
\sum_{\substack{n\sim X\\
n,[cn^{1-\gamma}]\equiv0\pmod{q}\\ n-1,[cn^{1-\gamma}]\equiv0\pmod{W}\\
n+j\cdot[cn^{1-\gamma}]\equiv0\pmod{\fq_j^{-1}[d_j,e_j]}\\
\text{for each }0\leq j\leq{k_0}
}}\chi^*(n^{\gamma})
\psi^*(cn^{1-\gamma})=\frac{9\delta\eta X^{\gamma}}{2W^2q^2\prod_{j=0}^{k_0}(\fq_j^{-1}[d_j,e_j])}+O(X^{\gamma-6\sigma_0}).
\end{equation}
Let $\fd_j=(q,d_j)$ and $\fe_j=(q,e_j)$ for each $0\leq j\leq{k_0}$. And let $d_j^*=d_j/\fd_j$ and $e_j^*=e_j/\fe_j$.
Clearly
\begin{align*}
\lambda_{d_0,\ldots,d_{k_0}}=&f\bigg(\frac{\log d_0}{\log R},\ldots,\frac{\log d_{k_0}}{\log R}\bigg)\prod_{j=0}^{k_0}\mu(d_j)\\
=&\prod_{j=0}^{k_0}\mu(\fd_j)\cdot f\bigg(\frac{\log \fd_0+\log d_0^*}{\log R},\ldots,\frac{\log \fd_{k_0}+\log d_{k_0}^*}{\log R}\bigg)\prod_{j=0}^{k_0}\mu(d_j^*).
\end{align*}
Let
$$
f_{\fd_0,\ldots,\fd_{k_0}}(t_0,\ldots,t_{k_0})=f\bigg(t_0+\frac{\log \fd_0}{\log R},\ldots,t_{k_0}+\frac{\log \fd_{k_0}}{\log R}\bigg).
$$
Then by Lemma \ref{maynard}, for any fixed $\fd_0,\fe_0,\ldots,\fd_{k_0},\fe_{k_0}$, we have
\begin{align*}
&\sum_{\substack{(d_0,\ldots,d_{k_0},e_0,\ldots,e_{k_0})\in\fX_q\\
(q,d_j)=\fd_j,\ (q,e_j)=\fe_j\\
\text{for each }0\leq j\leq{k_0}}}\frac{\lambda_{d_0,\ldots,d_{k_0}}\lambda_{e_0,\ldots,e_{k_0}}}{\prod_{j=0}^{k_0}(\fq_j^{-1}[d_j,e_j])}
=\sum_{\substack{d_0^*,\ldots,d_{k_0}^*,e_0^*,\ldots,e_{k_0}^*\\
(qW,[d_j^*,e_j^*])=1\\
\text{for each }0\leq j\leq{k_0}}}\frac{\lambda_{\fd_0 d_0^*,\ldots,\fd_{k_0} d_{k_0}^*}\lambda_{\fe_0 e_0^*,\ldots,\fe_{k_0} e_{k_0}^*}}{\prod_{j=0}^{k_0}[d_j^*,e_j^*]}\\
=&\frac{1}{(\log R)^{{k_0}+1}}\cdot\frac{(qW)^{{k_0}+1}}{\phi(qW)^{{k_0}+1}}\int_{\R^{{k_0}+1}}\bigg(\frac{\partial^{{k_0}+1} f_{\fd_0,\ldots,\fd_{k_0}}(t_0,\ldots,t_{k_0})}{\partial t_0\cdots\partial t_{k_0}}\bigg)^2 d t_0\cdots d t_{k_0}.
\end{align*}
Assume that
$$
M=\max_{(t_0,\ldots,t_{k_0})\in\R^{{k_0}+1}}\bigg|\frac{\partial^{{k_0}+1} f(t_0,\ldots,t_{k_0})}{\partial t_0\cdots\partial t_{k_0}}\bigg|.
$$
Since $q$ is coprime to $W$, $q>1$ implies $q>\log\log\log X$.
Then
\begin{align*}
&\sum_{\substack{2\leq q\leq R^2\\
(d_0,\ldots,d_{k_0},e_0,\ldots,e_{k_0})\in\fX_q}}\frac{\lambda_{d_0,\ldots,d_{k_0}}
\lambda_{e_0,\ldots,e_{k_0}}}{W^2q^2\prod_{j=0}^{k_0}(\fq_j^{-1}[d_j,e_j])}\\
\ll&\sum_{q\geq \log\log\log\log X}\frac{1}{q^{2-\epsilon}}\cdot\frac{1}{(\log X^{\delta_0})^{{k_0}+1}}\cdot\frac{W^{{k_0}-1}M^2}{\phi(W)^{{k_0}+1}}\\
=&o\bigg(\frac{1}{(\log R)^{{k_0}+1}}\cdot\frac{W^{{k_0}-1}}{\phi(W)^{{k_0}+1}}\bigg).
\end{align*}
Thus in view of (\ref{sumchi34}) and (\ref{nfschi34}), we have
\begin{align*}
&\sum_{\substack{n\sim X\\ n+j\cdot[cn^{1-\gamma}]\equiv1\pmod{W}\\
\text{for each }0\leq j\leq{k_0}}}\chi^*(n^{\gamma})
\psi^*(cn^{1-\gamma})\bigg(\sum_{\substack{d_i\mid n+i\cdot[cn^{1-\gamma}]\\ 0\leq i\leq{k_0}}}\lambda_{d_0,\ldots,d_{k_0}}\bigg)^2\notag\\
=&\frac{9\delta\eta X^{\gamma}}{2}\sum_{\substack{d_0,\ldots,d_{k_0},e_0,\ldots,e_{k_0}\\
W,[d_0,e_0],\ldots,[d_{k_0},e_{k_0}]
\text{ coprime}}}\frac{\lambda_{d_0,\ldots,d_{k_0}}\lambda_{e_0,\ldots,e_{k_0}}}
{W^2\prod_{j=0}^{k_0}[d_j,e_j]}+o\bigg(\frac{X^{\gamma}}{(\log R)^{{k_0}+1}}\cdot\frac{W^{{k_0}-1}}{\phi(W)^{{k_0}+1}}\bigg)\notag\\
=&(1+o(1))\cdot \frac{9\delta\eta X^{\gamma}}{2(\log R)^{{k_0}+1}}\cdot\frac{W^{{k_0}-1}}{\phi(W)^{{k_0}+1}}\int_{\R^{{k_0}+1}}\bigg(\frac{\partial^{{k_0}+1} f(t_0,\ldots,t_{k_0})}{\partial t_0\cdots\partial t_{k_0}}\bigg)^2 d t_0\cdots d t_{k_0}.
\end{align*}
Combining the above equation with (\ref{sumchi34nc}), we get
\begin{proposition}\label{maynardnc}
\begin{align}
&\sum_{\substack{n\sim X,\ n\in\N^c\\ \fs_j(n)\equiv1\pmod{W}\\
\text{for each }0\leq j\leq{k_0}}}\chi^*(n^{\gamma})
\psi^*(cn^{1-\gamma})\bigg(\sum_{\substack{d_i\mid \fs_i(n)\\ 0\leq i\leq{k_0}}}\lambda_{d_0,\ldots,d_{k_0}}\bigg)^2\notag\\
\leq&\frac{1+o(1)}{(\log R)^{{k_0}+1}}\cdot\frac{W^{{k_0}-1}}{\phi(W)^{{k_0}+1}}\cdot \frac{9\delta_0\eta_0 X^{\gamma}}{2}\int_{\R^{{k_0}+1}}\bigg(\frac{\partial^{{k_0}+1} f(t_0,\ldots,t_{k_0})}{\partial t_0\cdots\partial t_{k_0}}\bigg)^2 d t_0\cdots d t_{k_0}.
\end{align}
\end{proposition}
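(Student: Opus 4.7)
The plan is to dominate the sum restricted to $n\in\N^c$ by an unrestricted sum, weighted by the smooth majorants $\chi^*$ and $\psi^*$, and then evaluate this larger sum via the same Fourier/sieve machinery developed in Section~\ref{sec2}. First I would use (\ref{chi12to3})--(\ref{chi2to4}) to observe that $\chi^*(n^{\gamma})=\psi^*(cn^{1-\gamma})=1$ whenever the original cutoffs $\chi(\fs_h(n)^{\gamma})\psi(c\fs_h(n)^{1-\gamma})$ are positive for some $h$. Moreover, by repeating the discussion preceding Lemma~\ref{nchi} with the slightly wider windows defining $\chi^*$ and $\psi^*$, I would verify that when $n\in\N^c$ and $\chi^*(n^{\gamma}),\psi^*(cn^{1-\gamma})>0$, the identity $\fs_h(n)=n+h[cn^{1-\gamma}]$ still holds for each $0\leq h\leq k_0$. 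Dropping the constraint $n\in\N^c$ (which the new majorants no longer enforce) then produces the one-sided inequality corresponding to (\ref{sumchi34nc}).

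Next I would expand the square and swap summation. Unlike in Section~\ref{sec2}, there is no longer a $\varpi(n)$ factor forcing $n$ to be prime, so the $[d_i,e_i]$ need not be pairwise coprime. To deal with this I would introduce the parameter $q$, the product of those primes $p>k_0$ dividing $([d_i,e_i],[d_j,e_j])$ for some $i<j$; since any such $p$ must divide both $n$ and $[cn^{1-\gamma}]$, this sorts the double sum into the classes $\fX_q$ of (\ref{sumchi34}). Writing $\fq_j=(q,[d_j,e_j])$, the moduli $W,q,\fq_0^{-1}[d_0,e_0],\ldots,\fq_{k_0}^{-1}[d_{k_0},e_{k_0}]$ become pairwise coprime, and the congruence system factors.

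Inside each $\fX_q$-class I would run the same Fourier argument as in Section~\ref{sec2}: detect the congruences by additive characters, Fourier-expand $\chi^*$ and $\psi^*$ via Lemma~\ref{smooth}, and use Lemma~\ref{bf92exp} together with the Heath-Brown identity and van der Corput bounds to annihilate all nonzero frequencies, leaving only the zero-frequency term to produce the inner main-term evaluation (\ref{nfschi34}). This exponential-sum work is the technical heart of the argument and the step I expect to be the main obstacle; fortunately it is essentially a direct repetition of the calculations already carried out in Section~\ref{sec2}, now with the modified smooth weights, so the same bounds go through verbatim.

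Finally I would split the $q$-sum. The $q=1$ contribution is evaluated in closed form by Lemma~\ref{maynard}, yielding the claimed main term $(9\delta_0\eta_0/2)X^{\gamma}$ times the integral of $(\partial^{k_0+1}f/\partial t_0\cdots\partial t_{k_0})^2$. For $q\geq 2$, the coprimality $(q,W)=1$ forces $q>\log\log\log X$; writing $d_j=\fd_j d_j^*$ and applying Lemma~\ref{maynard} for each fixed $\fd_j,\fe_j$ (with the shifted symbol $f_{\fd_0,\ldots,\fd_{k_0}}$), then bounding $|\partial^{k_0+1}f|$ by its maximum $M$ and summing the tail $\sum_{q>\log\log\log X}q^{-2+\epsilon}$, shows that the $q\geq 2$ piece is $o$ of the main term. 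Combining this with the majorant inequality coming from (\ref{sumchi34nc}) completes the proof of the proposition.
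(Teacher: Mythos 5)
Your proposal mirrors the paper's proof essentially step for step: majorize by $\chi^*,\psi^*$, drop the $n\in\N^c$ constraint, sort the moduli into the classes $\fX_q$, evaluate the inner congruence sum by Fourier analysis, apply Lemma~\ref{maynard} with the shifted symbols $f_{\fd_0,\ldots,\fd_{k_0}}$, and show the $q\geq 2$ tail is negligible. One small overstatement: for the inner evaluation (\ref{nfschi34}) you cite Lemma~\ref{bf92exp} and the Heath-Brown identity, but those are tools for exponential sums over primes, whereas the sum in (\ref{nfschi34}) carries only the weight $\chi^*(n^{\gamma})\psi^*(cn^{1-\gamma})$ with no $\varpi(n)$ factor, so ordinary van der Corput bounds on $\sum_n e(f(n))$ already suffice --- this step is in fact strictly easier than the corresponding computation in Section~\ref{sec2}, not a verbatim repetition of it.
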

Now we are ready to prove Theorem \ref{main}.
According to the discussions of \cite[Sections 6.8-6.9]{Ga15},
we may construct a symmetric smooth function $f(t_0,t_1,\ldots,t_{k_0})$ such that
$$
\frac{(k_0+1)\int_{\R^{{k_0}}}\big(\frac{\partial^{{k_0}} f(t_0,t_1,\ldots,t_{k_0})}{\partial t_1\cdots\partial t_{k_0}}\big)^2 d t_1\cdots d t_{k_0}}{\int_{\R^{{k_0}+1}}\big(\frac{\partial^{{k_0}+1} f(t_0,\ldots,t_{k_0})}{\partial t_0\cdots\partial t_{k_0}}\big)^2 d t_0\cdots d t_{k_0}}\geq
\frac12\log k_0+\frac12\log\log k_0-2.
$$
So when
$$
\frac12\log k_0+\frac12\log\log k_0-2>8m\sigma_0^{-1},
$$
in view of Propositions \ref{maynardshapiro} and \ref{maynardnc}, we get that the sum
\begin{align*}
&\sum_{\substack{n\sim X,\ n\in\N^c\\ \fs_{j}(n)\equiv 1(\mod W)\\\text{for }0\leq j\leq {k_0}}}
w_n\bigg(\sum_{h=0}^{k_0}\varpi(\fs_{h}(n))\chi(\fs_h^{\gamma}(n))\psi(c\fs_h(n)^{1-\gamma})-m\chi^*(n^{\gamma})
\psi^*(cn^{1-\gamma})\log 3X\bigg)
\end{align*}
is positive,
where
$$
w_n=\bigg(\sum_{\substack{d_i\mid \fs_{i}(n)\\ 0\leq i\leq {k_0}}}\lambda_{d_0,d_1,\ldots,d_{k_0}}\bigg)^2\geq0.
$$
It follows that there exists $n\in[X,2X]\cap\N^c$  satisfying
$$
\sum_{h=0}^{k_0}\varpi(\fs_{h}(n))\chi(\fs_h^{\gamma}(n))\psi(c\fs_h(n)^{1-\gamma})-m\chi^*(n^{\gamma})
\psi^*(cn^{1-\gamma})\log 3X>0.
$$
By (\ref{chi12to3}) and (\ref{chi2to4}), if $$\chi(\fs_h^{\gamma}(n))\psi(c\fs_h(n)^{1-\gamma})>0$$ for some $0\leq h\leq k_0$, then
$$\chi^*(n^{\gamma})
\psi^*(cn^{1-\gamma})=1.
$$
So
$$
\sum_{h=0}^{k_0}\varpi(\fs_{h}(n))\chi(\fs_h^{\gamma}(n))\psi(c\fs_h(n)^{1-\gamma})-m\log 3X>0,
$$
i.e., there exist $0\leq h_1<h_2<\ldots<h_{m+1}\leq k_0$ such that
$$
\varpi(\fs_{h_j}(n))\chi(\fs_{h_j}^{\gamma}(n))\psi(c\fs_{h_j}(n)^{1-\gamma})>0
$$
for each $1\leq j\leq m+1$. Thus $\fs_{h_1}(n),\ldots,\fs_{h_{m+1}}(n)$ are the expected Piatetski-Shapiro primes.

\begin{Rem}
Motivated  by Theorem \ref{main}, we may propose the following conjecture.
\begin{conjecture} For any positive integer $k_0$ and any non-integral $c>1$, there exist infinitely many $n$ such that
$$
[n^c],\ [(n+1)^c],\ \ldots,\ [(n+k_0)^c]
$$
are all primes.
\end{conjecture}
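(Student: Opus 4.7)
The conjecture asks that for every non-integral $c>1$ and every $k_0\geq 1$ there are infinitely many $n$ with $[n^c],[(n+1)^c],\dots,[(n+k_0)^c]$ \emph{all} prime. This is the Piatetski-Shapiro analogue of the prime $k$-tuples conjecture, so a genuine proof lies well beyond present technology; any honest "plan" must identify where the obstruction sits rather than pretend to bypass it. The natural setup is to mimic Section 2: look for $n\sim X$ with $\{n^\gamma\}$ in a short interval of length $\asymp X^{\gamma-1}$ and $\{cn^{1-\gamma}\}$ in a short interval of length $\asymp 1/k_0$ (exactly the regime of Lemma \ref{nchi}), so that the Piatetski-Shapiro condition is automatic and each $\fs_h(n)$ simplifies to $n+h\cdot[cn^{1-\gamma}]$. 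The problem then reduces to the following restricted prime tuple statement: letting $d(n)=[cn^{1-\gamma}]$ and $\mathcal{S}\subset[X,2X]$ denote the set of $n$ satisfying the two fractional-part conditions (which is a density-$X^{\gamma-1}$ subset), show that infinitely often $n,\,n+d(n),\dots,n+k_0\,d(n)$ are simultaneously prime.

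From there one would try a Maynard--Tao style upper/lower bound sieve, but \emph{asking that all $k_0+1$ coordinates be prime} forces one to compare the "all prime" weight against a weight bounding it from above; in the unrestricted integer setting this is exactly what fails unconditionally. A conditional route would assume a version of Hardy--Littlewood for admissible tuples $(0,d,2d,\dots,k_0 d)$ that is uniform in the common difference $d$, together with a Fourier expansion of the indicators of $\{n^\gamma\}$ and $\{cn^{1-\gamma}\}$ lying in short intervals (via Lemma \ref{smooth}); the main term would come from the Hardy--Littlewood density times the measure of $\mathcal{S}$, and the error terms would require Type I/II exponential sum bounds in the style of Lemma \ref{bf92exp} but now applied to the $(k_0+1)$-fold von Mangoldt convolution rather than a single $\Lambda(n)$. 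The arithmetic of the common difference $d(n)=[cn^{1-\gamma}]$ imposing admissibility modulo small primes is a routine check, handled by the $W$-trick already used in Section 2.

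The hard part, and the true obstruction, is not the Piatetski-Shapiro conversion (Section 2 shows this is essentially cosmetic once one works in the $\mathcal{S}$-regime) but the underlying requirement that \emph{every} member of a length-$(k_0+1)$ admissible configuration in integers be prime. Even the $k_0=1$ case asks for infinitely many primes $p=[n^c]$ with $p+[cn^{1-\gamma}]$ also prime, a two-variable prime-pair statement strictly stronger than the twin prime conjecture in its Piatetski-Shapiro guise. Consequently the most one can realistically aim for with today's methods is a strengthening of Theorem \ref{main} in the direction "for any $\epsilon>0$, at least $(1-\epsilon)(k_0+1)$ of the values are prime once $k_0$ is large enough", pushed by optimizing the Maynard sieve weights $f(t_0,\dots,t_{k_0})$ used in Proposition \ref{maynardshapiro} and balancing against the upper bound Proposition \ref{maynardnc}; this still falls short of the conjecture, which appears to require a fundamentally new input on the prime $k$-tuples side.
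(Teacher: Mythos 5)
The statement you were asked about is a \emph{conjecture} that the paper merely poses (in the final Remark); the paper gives no proof, and you correctly recognized that none should be attempted. Your analysis of the obstruction is essentially right: after the reduction in Section 2 to the regime where $\fs_h(n)=n+h\cdot[cn^{1-\gamma}]$, the conjecture becomes a prime-tuples statement for the admissible configuration $(0,d,2d,\dots,k_0 d)$ with $d=[cn^{1-\gamma}]$, uniformly in $d$, together with the two fractional-part constraints; even the case $k_0=1$ contains a twin-prime-strength statement. The Maynard--Tao machinery used in the paper (Propositions \ref{maynardshapiro} and \ref{maynardnc}) can only guarantee $\gg\log k_0$ primes among the $k_0+1$ values, never all of them, because it is a lower-bound sieve comparison and cannot close the gap to a full $k$-tuple. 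Your closing remark that the Piatetski-Shapiro layer is ``cosmetic'' once one restricts to the good set $\mathcal{S}$ is a fair reading of what Section 2 actually accomplishes, and your suggested conditional route (a uniform Hardy--Littlewood hypothesis plus the Type I/II exponential-sum estimates of Lemma \ref{bf92exp}) is the natural framework under which the conjecture would follow. In short: there is nothing to compare against, since the paper does not prove the statement, and your refusal to manufacture a proof, together with the diagnosis of where the difficulty lives, is the correct response.
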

\end{Rem}

\section*{Acknowledgement}
\medskip

This work is supported by the National Natural Science Foundation of
China (Grant No. 11271249) and the Specialized Research Fund for the
Doctoral Program of Higher Education (No. 20120073110059).

\end{document}